\documentclass[12pt, reqno]{amsart}
\usepackage{amsmath,amsthm,amscd,amsfonts,color,mathrsfs}
\usepackage{amssymb, mathtools, lscape, float}
\usepackage{comment}
\usepackage{enumerate}
\usepackage{graphicx} 
\usepackage[shortlabels]{enumitem}
\usepackage{enumerate}
\usepackage{MnSymbol}
\usepackage[bookmarksnumbered, colorlinks, plainpages]{hyperref}
\hypersetup{colorlinks=true,linkcolor=blue, anchorcolor=green, citecolor=blue, urlcolor=red, filecolor=magenta, pdftoolbar=true}

\newtheorem{theorem}{Theorem}[section]

\newtheorem{Result}[theorem]{Result}
\newtheorem{proposition}[theorem]{Proposition}
\newtheorem{question}[theorem]{Question}
\newtheorem{remark}[theorem]{Remark}
\newtheorem{example}[theorem]{Example}
\newtheorem{lemma}[theorem]{Lemma}
\newtheorem{corollary}[theorem]{Corollary}
\newcommand\restr[2]{{
  \left.\kern-\nulldelimiterspace 
  #1 
  \littletaller 
  \right|_{#2} 
  }}

\newcommand{\littletaller}{\mathchoice{\vphantom{\big|}}{}{}{}}

\date{}
\begin{document}

\title [Weighted composition operators]{Numerical range and invariant subspaces of weighted composition operators on the Hardy space of Dirichlet series}

\author[S. Lahiri, S. Ojha, S. Halder, R. Birbonshi]{Sudeshna Lahiri, Sarita Ojha, Subhadip Halder, Riddhick Birbonshi}
	
\address[Lahiri]{Department of Mathematics, Indian Institute of Engineering Science and Technology, Shibpur, Howrah 711103, West Bengal, India}
\email{sudeshna.lahiri88@gmail.com}
	
\address[Ojha]{Department of Mathematics, Indian Institute of Engineering Science and Technology, Shibpur, Howrah 711103, West Bengal, India}
\email{sarita.ojha89@gmail.com}

\address[Halder] {Department of Mathematics, Jadavpur University, Kolkata 700032, West Bengal, India}
\email{subhadiphalderju@gmail.com}

\address[Birbonshi] {Department of Mathematics, Jadavpur University, Kolkata 700032, West Bengal, India}
\email{riddhick.math@gmail.com}

\subjclass[2020]{47A12, 47A15, 47B32, 47B33}

\keywords{Composition operator, Weighted composition operator, Numerical range, Invariant subspaces, Hardy space, Dirichlet series.}

\begin{abstract}
In this paper, we study various results on numerical range of weighted composition operators on Hardy space of Dirichlet series. Certain conditions are provided under which the numerical range contains zero and some circular or elliptic disks. With the help of invariant subspaces of the operator, we also determine the numerical range of reductive weighted composition operators on this space. Additionally, we discuss some results on the Davis-Wielandt shell of composition operators with some natural questions that arise from our findings.
\end{abstract}

\maketitle

\tableofcontents

\section{Introduction}
The space $\mathcal{H}^2$ denotes the Hardy space of Dirichlet series with square summable coefficients, i.e.,
$$\mathcal{H}^2=\left\{f(s)=\sum_{n=1}^\infty a_n n^{-s}:\|f\|=\left(\sum_{n=1}^\infty|a_n|^2\right)^{1/2}<\infty\right\}.$$
By the Cauchy-Schwartz inequality, the functions in $\mathcal{H}^2$ are all analytic on the half-plane $\mathbb{C}_{1/2}=\{s\in \mathbb{C}: \Re s>1/2\}$. For $f(s)=\sum_{n=1}^\infty a_n n^{-s}$ and $g(s)=\sum_{n=1}^\infty b_n n^{-s}$ in $\mathcal{H}^2$, the inner product on $\mathcal{H}^2$ is given by
$$\langle f,g\rangle=\sum_{n=1}^\infty a_n \overline{b_n}\ .$$
Thus $\mathcal{H}^2$ appears as a Hilbert space of analytic functions on $\mathbb{C}_{1/2}$ where the reproducing kernel at $\omega\in\mathbb{C}_{1/2}$ is given by $K_\omega(s)=\zeta(\overline{\omega}+s)$ for $\Re s>1/2$ and $\|K_\omega\|=(\zeta(2\Re \omega))^{1/2}$ where $\zeta$ is the Riemann zeta function $\zeta(z)=\displaystyle\sum_{n=1}^\infty n^{-z}$ for $\Re z>1$. It follows that $\langle f, K_\omega\rangle=f(\omega)$ for any $f$ in $\mathcal{H}^2$. For $n\geq 1$, the functions $e_n(s)=n^{-s}\in \mathcal{H}^2$ form an orthonormal basis of $\mathcal{H}^2$. Let $\mathcal{D}$ denote the space of functions which can be represented by a convergent Dirichlet series $$f(s)=\sum_{n=1}^\infty a_n n^{-s}; \ a_n\in \mathbb{C}$$ in some half-plane $\sigma_c(f)<\Re s<+\infty$ where $\sigma_c(f)$ is defined by
\begin{align*}
    \sigma_c(f)=\inf \{\sigma\in\mathbb{R}:\sum_{n=1}^\infty a_n n^{-\sigma}\ \text{converges}\}.
\end{align*}
Let $\mathbb{C}_0$ denote the right half-plane $\{s\in \mathbb{C}: \Re s>0\}$. The space $H^\infty(\mathbb{C}_0)$ denotes the Banach space of bounded and analytic functions on $\mathbb{C}_0$ endowed with the norm
$$\|f\|_{\infty}=\sup_{\Re s>0} |f(s)|.$$
Hedenmalm et al. \cite[Theorem 3.1]{hedenmalm1997hilbert} have shown that the set of multipliers of $\mathcal{H}^2$ denoted by $\mathcal{M}$ is the subset of $H^\infty(\mathbb{C}_0)$ belonging to $\mathcal{D}$. For an analytic function $\psi$ on $\mathbb{C}_{1/2}$ and an analytic self-map $\phi$ on $\mathbb{C}_{1/2}$, the weighted composition operator on $\mathcal{H}^2$ with symbols $\psi$ and $\phi$ is defined by
$$C_{\psi,\phi} f=\psi\cdot (f\circ \phi)$$ for all $f$ in $\mathcal{H}^2$. In particular, when $\psi\equiv1$, $C_{\psi,\phi}$ becomes the composition operator $C_\phi$ and when $\phi$ is the identity map on $\mathbb{D}$, $C_{\psi,\phi}$ becomes the multiplication operator. Whenever $C_\phi$ maps $\mathcal{H}^2$ into $\mathcal{H}^2$, the closed graph theorem implies that $C_\phi$ is bounded (see \cite{gordon1999composition}). More explicitly, Gordon and Hedenmalm \cite[Theorem B]{gordon1999composition} have provided the following necessary and sufficient condition for boundedness of composition operator on $\mathcal{H}^2$. 
\begin{theorem}\cite{gordon1999composition}\label{bdd}
    An analytic function $\phi:\mathbb{C}_{1/2}\to\mathbb{C}_{1/2}$ defines a bounded composition operator $C_\phi:\mathcal{H}^2\to\mathcal{H}^2$ if and only if
    \begin{enumerate}[(a)]
        \item it is of the form 
        $$\phi(s)=c_0s+\eta(s)$$
        where $c_0\in\mathbb{N}\cup\{0\}$ and $\eta(s)=\sum_{n=1}^\infty c_n n^{-s}\in\mathcal{D}$, and
        \item $\phi$ has an analytic extension to $\mathbb{C}_0$, also denoted by $\phi$ such that
        \begin{enumerate}[{(i)}]
            \item $\phi(\mathbb{C}_0)\subset \mathbb{C}_0$ if $c_0>0$, and
            \item $\phi(\mathbb{C}_0)\subset \mathbb{C}_{1/2}$ if $c_0=0$.
        \end{enumerate}
    \end{enumerate}
\end{theorem}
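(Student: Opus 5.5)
We plan to follow the route of Gordon and Hedenmalm, with the two implications handled separately; the only tools needed are the orthonormal basis $e_n(s)=n^{-s}$ and the reproducing kernels $K_\omega$.

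\emph{Necessity.} Suppose $C_\phi$ is bounded on $\mathcal{H}^2$. Then $C_\phi e_2=2^{-\phi(s)}\in\mathcal{H}^2\subset\mathcal{D}$, so $2^{-\phi}$ is a Dirichlet series $\sum b_n n^{-s}$. On a half-plane $\Re s\ge\sigma_0$ this series is dominated by its first nonzero term $b_k k^{-s}$. Taking a logarithm there gives $\phi(s)\log 2=s\log k-\log b_k+(\text{Dirichlet series without constant term})$. This suggests $c_0=\log k/\log 2$. To show that $c_0$ is an integer, we repeat the argument with $3^{-\phi}=(2^{-\phi})^{\log 3/\log 2}$, which must also be a Dirichlet series. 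Its leading frequency is $k^{\log3/\log2}$, and this must be an integer. For all primes $p$ simultaneously, $p^{c_0}\in\mathbb{N}$ forces $c_0\in\mathbb{N}\cup\{0\}$; this follows from a Bohr-type argument on the linear independence of the $\log p$. The formula $\phi=c_0s+\eta$ then gives (a) on a half-plane, with $\eta\in\mathcal{D}$.

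For (b), we use the Bohr lift. The map $s\mapsto(p_j^{-s})_j$ identifies $\mathcal{H}^2$ with $H^2(\mathbb{D}^\infty)$. Using Kronecker's theorem, vertical translates $\eta(s+i\tau)$ approximate every $\eta_\chi(s)=\sum c_n\chi(n)n^{-s}$ for characters $\chi$. Boundedness of $C_\phi$ gives point evaluation bounds $|f(\phi(s))|\le\|C_\phi\|\,\|f\|\,\zeta(2\Re s)^{1/2}$, and these bounds are uniform in such twists. Applying them to $f=K_\omega$ keeps $\Re\phi>1/2$ (respectively $>0$), first on $\mathbb{C}_{1/2}$. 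The extension to $\mathbb{C}_0$ is the main obstacle. Our plan is to show that $\eta$ belongs to a class with bounded $\Re$-behaviour (a Hardy-type class of Dirichlet series on $\mathbb{C}_0$) by a Harnack/Poisson argument along vertical lines, and then apply the maximum principle for harmonic functions with the Kronecker flow.

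\emph{Sufficiency.} Assume (a) and (b). First take $c_0=0$. Write $f=\sum a_n n^{-s}$, so $f\circ\phi=\sum a_n n^{-\phi}$. Via the Bohr lift, and using that $\Re\phi>1/2$ with room to spare on $\mathbb{C}_0$, we show that $\|f\circ\phi\|$ is controlled by a Littlewood subordination-type estimate on each finite polydisc $\mathbb{D}^N$, with constants uniform in $N$. The key step is to estimate $\|f\circ\phi\|^2$ as a limit of vertical averages $\lim_T\frac1{2T}\int_{-T}^T|f(\phi(\sigma+it))|^2dt$. We then dominate it by the Poisson-weighted integral of $|f|^2$ on a line $\Re s=\sigma'>1/2$, using that the pullback of the boundary measure by $\phi$ is dominated by Lebesgue measure.

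Now take $c_0\ge1$. Here $n^{-\phi}=n^{-c_0s}n^{-\eta}$, and the frequencies $n^{c_0}$ are distinct, which makes the map an isometry-like rearrangement. Invariance $\phi(\mathbb{C}_0)\subset\mathbb{C}_0$ together with Littlewood's subordination on $\mathbb{D}^\infty$ (the symbol is an analytic self-map of the polydisc in the $p^{-s}$ coordinates, fixing orthogonality structure) then yields $\|C_\phi f\|\le\|f\|$. We expect the uniform-in-dimension subordination estimate to be the hardest step. Our first attempt would be a Carleson-embedding estimate on $\mathbb{D}^N$, followed by passing to the limit $N\to\infty$ by density of Dirichlet polynomials.
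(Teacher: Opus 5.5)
The paper gives no proof of this theorem; it only cites Gordon and Hedenmalm. Your outline therefore has to stand on its own, and it has genuine gaps at exactly the hard points. Start with sufficiency for $c_0=0$. The premise that $\Re\phi>\tfrac12$ ``with room to spare'' on $\mathbb{C}_0$ is false. The symbol $\phi(s)=\tfrac32+2^{-s}$ maps $\mathbb{C}_0$ into $\mathbb{C}_{1/2}$, yet $\inf_{\mathbb{C}_0}\Re\phi=\tfrac12$. Moreover, the images of the lines $\Re s=\sigma$ with $\sigma$ small cross every line $\Re w=\sigma'>\tfrac12$. So a Poisson integral of $|f|^2$ over such a line cannot majorize $|f|^2$ on $\phi(\mathbb{C}_0)$. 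With genuine room to spare the result would be easy: $|f\circ\phi|\le\zeta(2\sigma')^{1/2}\|f\|$ on $\mathbb{C}_0$, and the $\mathcal{H}^2$ norm of a bounded Dirichlet series is at most its sup norm. Instead, the subordination must be taken against boundary values on the critical line itself. That gives $\|f\circ\phi\|^2\le\int_{\mathbb{R}}|f(\tfrac12+it)|^2P_{c_1}(t)\,dt$, where $P_{c_1}$ is the Poisson kernel of $\mathbb{C}_{1/2}$ at $c_1=\phi(+\infty)$. Bounding the right-hand side by $C\|f\|^2$ requires the local embedding theorem $\sup_\tau\int_\tau^{\tau+1}|f(\tfrac12+it)|^2\,dt\le C\|f\|^2$ (a Hilbert / Montgomery--Vaughan type inequality). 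That is the real content of this case, and it is missing from your plan. For $c_0\ge1$, ``Littlewood subordination on $\mathbb{D}^\infty$'' is not a valid principle. Already on $H^2(\mathbb{D}^2)$ the origin-fixing self-map $(z_1,z_2)\mapsto(z_1,z_1)$ induces an unbounded composition operator. The estimate has to exploit the special form $w_j=z_j^{c_0}p_j^{-\tilde\eta(z)}$ of the lifted symbol, with one scalar $\tilde\eta$ for all coordinates, and you never use it. Your ``isometry-like rearrangement'' only describes the case $\eta\equiv c_1$.

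On the necessity side, the bound $|f(\phi(s))|\le\|C_\phi\|\,\|f\|\,\zeta(2\Re s)^{1/2}$ only gives information at points of $\mathbb{C}_{1/2}$, where $\phi(\mathbb{C}_{1/2})\subset\mathbb{C}_{1/2}$ is already assumed. Twists and vertical translations do not change $\Re s$, so they cannot reach $0<\Re s\le\tfrac12$. The Harnack/Kronecker sentence is a hope, not an argument. A mechanism that does work is to pass to finitely many primes:
\begin{itemize}
\item Let $P_N$ be the projection onto series supported on integers built from $p_1,\dots,p_N$, and let $\phi_N$ be the corresponding truncation of $\phi$.
\item Then $P_NC_\phi=C_{\phi_N}$ if $c_0=0$, and $P_NC_\phi=C_{\phi_N}P_N$ if $c_0\ge1$.
\item For $g$ in the range of $P_N$, $|g(s)|\le\|g\|\prod_{j\le N}(1-p_j^{-2\Re s})^{-1/2}$ at every $s\in\mathbb{C}_0$.
\item A test function that blows up at a point of the critical line (resp.\ the imaginary axis) then forces $\phi_N(\mathbb{C}_0)\subset\mathbb{C}_{1/2}$ (resp.\ $\mathbb{C}_0$).
\item A normal-families limit then gives the extension of $\phi$.
\end{itemize}

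In (a), the logarithm of $1+\sum_{n>k}(b_n/b_k)(n/k)^{-s}$ is not an ordinary Dirichlet series. Its frequencies are $\log(n_1\cdots n_j/k^j)$, which are logarithms of rationals. To get $\eta\in\mathcal{D}$ you must also expand through $3^{-\phi}$, where the denominators are powers of $3$, and then use uniqueness of generalized Dirichlet coefficients. Finally, $p^{c_0}\in\mathbb{N}$ for all primes $p$ does force $c_0\in\mathbb{N}\cup\{0\}$, but not because the $\log p$ are $\mathbb{Q}$-linearly independent; that fact says nothing about such relations. Indeed, whether $2^{t},3^{t}\in\mathbb{N}$ forces $t\in\mathbb{Z}$ is an open case of the four exponentials conjecture. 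Use P\'olya's argument instead:
\begin{itemize}
\item By multiplicativity, $n^{c_0}\in\mathbb{N}$ for all $n$.
\item For an integer $k>c_0$, the $k$-th forward difference of $x^{c_0}$ at $n$ equals $c_0(c_0-1)\cdots(c_0-k+1)\xi^{c_0-k}$ for some $\xi\in(n,n+k)$.
\item This quantity is an integer tending to $0$, hence eventually $0$, which forces $c_0\in\{0,\dots,k-1\}$.
\end{itemize}
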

We say that $\phi$ is a $c_0$-symbol if it satisfies the conditions of Theorem \ref{bdd}. If $\phi$ is a $c_0$-symbol and $\psi\in \mathcal{M}$, then $C_{\psi,\phi}$ is bounded on $\mathcal{H}^2$. On the other hand, if $C_{\psi,\phi}$ is bounded on $\mathcal{H}^2$, then $\psi\in \mathcal{H}^2$. 
For any $\omega\in \mathbb{C}_{1/2}$, the adjoint of $C_{\psi,\phi}$ satisfies the relation
\begin{equation*}
C_{\psi,\phi}^*K_\omega=\overline{\psi(\omega)}K_{\phi(\omega)}.
\end{equation*}
Composition operators and weighted composition operators on the Hardy space of Dirichlet series have been extensively studied in the existing literature (see \cite{bayart2002hardy,gordon1999composition,maofa2018weighted, yao2021complex} and the references therein).
  \par Let $H$ be a complex separable Hilbert space and $\mathbb{B}(H)$ denote the algebra of all bounded linear operators on $H$. The numerical range $W(T)$ and the numerical radius $w(T)$ of an operator $T \in \mathbb{B}(H)$ is defined as $$ W(T)= \{\langle Tx , x \rangle  : x \in H ,\ \| x \| =1 \}\ \text{and}\ w(T)=\{\sup |\lambda|:\lambda \in W(T)\}.$$
It is well known that $W(T)$ is a bounded and convex subset of $\mathbb{C}$. Also, the spectrum of $T$ is contained in the closure of $W(T)$. Further details in this topic can be found in \cite{gau2021numerical,gustafson1997numerical}.
\par The question of whether $0$ lies in the numerical range of composition operators on the classical Hardy space was first investigated by Bourdon and Shapiro \cite{bourdon2002zero} and subsequently studied for weighted composition operators on various function spaces \cite{gunatillake2014numerical,halder2026note,sen2025numerical,shaabani2022numericalfock}.
   Finet and Queffélec \cite{finet2004numerical} have studied the numerical range of composition operators on Hardy space of Dirichlet series. It is therefore interesting to focus on the numerical range of weighted composition operators on the Hardy space of Dirichlet series. 
   \par Moreover, Wang and Yao \cite{wang2015invariant} have examined the invariant subspaces of composition operators on this space. Extending their results, we study invariant subspaces of weighted composition operators and, as a consequence, determine the numerical range of reductive weighted composition operators on the Hardy space of Dirichlet series. 
   \par Furthermore, the Davis-Wielandt shell has been widely studied as a generalization of the numerical range. Recently, Liu and Liu \cite{liu2026davis} have investigated the Davis-Wielandt shell of composition operators on the Hardy space. Accordingly, the study of Davis-Wielandt shell of composition operators on the Hardy space of Dirichlet series emerges as a natural and significant problem.\\
The present paper is organized as follows:\par
 In Section \ref{Sec2}, we discuss the results concerning the inclusion of zero in the numerical range of $C_{\psi,\phi}$ on $\mathcal{H}^2$. In Section \ref{Sec3}, we provide sufficient conditions under which certain circular or elliptic disks are contained in the numerical range of the operator. Section \ref{Sec4} deals with the invariant and reducing subspaces of this operator. Finally, in Section \ref{Sec5}, we give some additional observations on the Davis-Wielandt shell of composition operators on $\mathcal{H}^2$ and discuss some questions emerging from our analysis.

\section{Inclusion of zero in the numerical range}\label{Sec2}

In this section, we give specific conditions under which the numerical range of a weighted composition operator defined on Hardy space of Dirichlet series contains zero, and determine whether $0$ lies in its interior. In \cite[Proposition 8]{finet2004numerical}, Finet and Queffélec have shown that for any composition operator with non-identity symbol, $0$ belongs to the closure of its numerical range. The following result extends this for weighted composition operators. 

\begin{theorem}\label{sum}
    Let $\phi_i\ (1\leq i\leq n)$ be $c_0$-symbols and $\psi_i\ (1\leq i\leq n)$ be analytic functions on the half-plane $\mathbb{C}_{1/2}$ such that $C_{\psi_i,\phi_i}\in \mathbb{B}(\mathcal{H}^2)$ for all $i$ with $1\leq i\leq n$.
    \begin{enumerate}[label=(\roman*)]
    \item \label{i} If $\psi_i$'s have a common zero in $\mathbb{C}_{1/2}$ for all $i$ with $1\leq i\leq n$, then $0\in W\left(\displaystyle \sum_{i=1}^n C_{\psi_i,\phi_i}\right).$
    \item \label{ii} If, for all $i$ with $1\leq i\leq n$, $\phi_i$'s are non-identity maps on $\mathbb{C}_{1/2}$ and $\psi_i\in H^\infty(\mathbb{C}_0)$, then $0\in \overline{W\left(\displaystyle \sum_{i=1}^n C_{\psi_i,\phi_i}\right)}.$
    \end{enumerate}
\end{theorem}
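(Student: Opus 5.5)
For part (i), we use the adjoint relation $C_{\psi,\phi}^*K_\omega=\overline{\psi(\omega)}K_{\phi(\omega)}$. Let $\omega_0\in\mathbb{C}_{1/2}$ be a common zero of the $\psi_i$, and put $T=\sum_{i=1}^n C_{\psi_i,\phi_i}$. Then
$$T^*K_{\omega_0}=\sum_{i=1}^n\overline{\psi_i(\omega_0)}K_{\phi_i(\omega_0)}=0.$$
With the unit vector $x=K_{\omega_0}/\|K_{\omega_0}\|$ this gives $\langle Tx,x\rangle=\langle x,T^*x\rangle=0$, so $0\in W(T)$. Nothing more is needed for (i).

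For part (ii), we imitate Finet and Queffélec and test $T$ against normalized reproducing kernels $k_\omega=K_\omega/\zeta(2\Re\omega)^{1/2}$ as $\omega$ approaches the boundary line $\Re\omega=1/2$. We compute
$$\langle Tk_\omega,k_\omega\rangle=\langle k_\omega,T^*k_\omega\rangle=\sum_{i=1}^n\psi_i(\omega)\,\frac{\zeta(\phi_i(\omega)+\overline{\omega})}{\zeta(2\Re\omega)},$$
using $K_{\phi_i(\omega)}(\omega)=\zeta(\overline{\phi_i(\omega)}+\omega)$ conjugated. Since $\psi_i\in H^\infty(\mathbb{C}_0)$, the factors $\psi_i(\omega)$ are uniformly bounded. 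It therefore suffices to find a sequence $\omega_k$ with $\Re\omega_k\downarrow 1/2$ such that, for every $i$, the quantity $\zeta(\phi_i(\omega_k)+\overline{\omega_k})$ stays bounded, or at least is $o(\zeta(2\Re\omega_k))$. Since $\zeta(2\sigma)\sim 1/(2\sigma-1)\to\infty$, the ratio then tends to $0$, and $0\in\overline{W(T)}$.

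The main obstacle is choosing $\omega_k$ so that this works simultaneously for all $i$. We split by the characteristic $c_0$ of each symbol.
\begin{itemize}
\item If $c_0\ge1$, then $\Re\phi_i(\omega)\ge c_0\Re\omega+\Re\eta_i(\omega)$, and Gordon--Hedenmalm theory gives $\Re\phi_i(\omega)\ge\Re\omega$ on $\mathbb{C}_0$. So $\Re(\phi_i(\omega)+\overline{\omega})\ge 2\Re\omega$, and $|\zeta(\phi_i(\omega)+\overline{\omega})|$ is bounded by $\zeta(2\Re\omega)$. That alone is not enough. We use instead that $\phi_i\neq\mathrm{id}$: either $c_0\ge2$, or $c_0=1$ with $\eta_i$ a nonconstant Dirichlet series or a nonzero constant of positive real part, or $\phi_i(s)=s+i\tau$ with $\tau\neq0$. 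In the last case $\zeta(2\sigma+i\tau)$ stays bounded as $\sigma\to1/2$, because $\zeta$ has its only pole at $1$. In the other cases we expect $\Re\phi_i(\omega)-\Re\omega$ to stay bounded below by a positive constant along a suitable sequence, giving boundedness again.
\item If $c_0=0$, then $\phi_i(\mathbb{C}_0)\subset\mathbb{C}_{1/2}$ and $\phi_i(\omega)$ tends to a finite limit $\eta_i$'s constant term as $\Re\omega\to\infty$. We choose $\omega$ with large imaginary part or along a vertical line where $\phi_i(\omega)+\overline{\omega}$ avoids the pole at $1$. More simply, note $\Re(\phi_i(\omega)+\overline\omega)>1$ and argue that $\phi_i(\omega)+\overline\omega$ stays away from $1$ for $\Im\omega$ chosen suitably, say $\Im\omega_k=t$ fixed with $t$ avoiding finitely many bad values.
\end{itemize}

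To handle all $i$ at once, we fix a single vertical ordinate $t$ and let $\omega_k=\sigma_k+it$ with $\sigma_k\downarrow1/2$. We choose $t$ so that for every $i$ the limit point of $\phi_i(\omega_k)+\overline{\omega_k}$, which exists after passing to a subsequence by boundedness or compactness, is not $1$. Since the set of bad $t$ for each $i$ should be small (the identity-like behaviour $\phi_i(s)-s\equiv$ const is excluded by non-identity, and analyticity forces the bad set to be discrete), such a $t$ exists. This genericity step is where we expect the real work, and we would first try to reduce it directly to Finet--Queffélec's Proposition 8 argument applied to each $\phi_i$ separately, checking that their sequence can be taken common.
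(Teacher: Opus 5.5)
Part (i) is exactly the paper's argument. For (ii), the strategy in your last paragraph is precisely the paper's proof: approach a point $a=\tfrac12+it$ of the line $\Re s=\tfrac12$ horizontally, with $a$ fixed by none of the $\phi_i$. Then $\sum_i\psi_i(\omega)\zeta(\phi_i(\omega)+\overline{\omega})$ stays bounded while $\zeta(2\Re\omega)\to\infty$, so the case analysis by $c_0$ is unnecessary and can be dropped. The ``genericity step'' you expect to be the real work is immediate. Each $\phi_i$ is analytic on $\mathbb{C}_0\supset\{\Re s=\tfrac12\}$ with $\phi_i-\mathrm{id}\not\equiv 0$, so its fixed points on the line are countable. (The relevant exclusion is $\phi_i(s)-s\equiv 0$, not ``$\equiv$ const'': translations $s+c$ with $c\neq 0$ have no fixed points.) At any other $a$, continuity of $\phi_i$ gives $\phi_i(\omega)+\overline{\omega}\to\phi_i(a)+1-a\neq 1$ without passing to a subsequence, and $\zeta$ is continuous at that limit because its only pole is at $1$.
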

\begin{proof}
  For any $\omega\in \mathbb{C}_{1/2}$, we have,
    \begin{align*}
        \left\langle \left(\displaystyle \sum_{i=1}^n C_{\psi_i,\phi_i}\right) \frac{K_\omega}{\|K_\omega\|},\frac{K_\omega}{\|K_\omega\|}\right\rangle&=\left\langle\frac{K_\omega}{\|K_\omega\|}, \left(\displaystyle \sum_{i=1}^n C_{\psi_i,\phi_i}\right)^* \frac{K_\omega}{\|K_\omega\|}\right\rangle\\
        &=\left\langle\frac{K_\omega}{\|K_\omega\|},\frac{1}{\|K_\omega\|}\sum_{i=1}^n\overline{\psi_i(\omega)}K_{\phi_i(\omega)}\right\rangle.
    \end{align*}
    \begin{enumerate}[label=(\roman*)]
    \item
    Let $\psi_i$'s have a common zero (say, $\omega_0$) in $\mathbb{C}_{1/2}$, then
    $ \left\langle \left(\displaystyle \sum_{i=1}^n C_{\psi_i,\phi_i}\right) \frac{K_{\omega_0}}{\|K_{\omega_0}\|},\frac{K_{\omega_0}}{\|K_{\omega_0}\|}\right\rangle=0.$ Thus $0\in W\left(\displaystyle \sum_{i=1}^n C_{\psi_i,\phi_i}\right).$
    \item 
    Let $L_{1/2}=\{s\in \mathbb{C}: \Re s=1/2\}$ and $A_i=\{s\in L_{1/2}:\phi_i(s)\neq s\}$ for $1\leq i \leq n$. Since $\phi_i:\mathbb{C}_{1/2}\to \mathbb{C}_{1/2}$ is not identity for $1\leq i \leq n$, the sets $A_i^c$ are countable sets. Thus it follows that, $\displaystyle\bigcap_{i=1}^nA_i$ is non-empty, i.e., there exists $a\in L_{1/2}$ such that $\phi_i(a)\neq a$ for $1\leq i \leq n$. Let $\epsilon>0$ and set $a_\epsilon =a+\epsilon$. Then $w_\epsilon=\left\langle \left(\displaystyle \sum_{i=1}^n C_{\psi_i,\phi_i}\right) \frac{K_{a_\epsilon}}{\|K_{a_\epsilon}\|},\frac{K_{a_\epsilon}}{\|K_{a_\epsilon}\|}\right\rangle\in W\left(\displaystyle \sum_{i=1}^n C_{\psi_i,\phi_i}\right)$. Now,
    \begin{align*}
        w_\epsilon &=\frac{1}{\|K_{a_\epsilon}\|^2}\langle K_{a_\epsilon},\sum_{i=1}^n\overline{\psi_i(a_\epsilon)}K_{\phi_i(a_\epsilon)}\rangle\\
        &=\frac{1}{\|K_{a_\epsilon}\|^2}\sum_{i=1}^n\psi_i(a_\epsilon)\langle K_{a_\epsilon},K_{\phi_i({a_\epsilon})}\rangle\\
        &=\frac{1}{\|K_{a_\epsilon}\|^2}\sum_{i=1}^n\psi_i(a_\epsilon)\zeta(\overline{a_\epsilon}+\phi_i(a_\epsilon)):=\frac{B_\epsilon}{D_\epsilon}
    \end{align*}
    Also $\psi_i\in H^\infty(\mathbb{C}_0)$ and since $\phi_i(a)\neq a$, therefore we have, $\phi_i(a)+\overline{a}=\phi_i(a)+1-a\neq 1$. Hence $B_\epsilon$ tends to the finite value $\sum_{i=1}^n\psi_i(a)\zeta(\overline{a}+\phi_i(a))$ as $\epsilon\to 0$. Since the only singularity of the meromorphic continuation of $\zeta(s)$ into $\mathbb{C}$ is a simple pole at $s=1$, therefore,  $D_\epsilon\to\infty$ as $\epsilon\to 0$. Thus $w_\epsilon\to 0$ and hence $0\in \overline{W\left(\displaystyle \sum_{i=1}^n C_{\psi_i,\phi_i}\right)}$.
\end{enumerate}
\end{proof}

\noindent The following result is obtained from Theorem \ref{sum} by taking $n=1$. 
\begin{corollary}\label{closure}
    Let $\phi$ be a $c_0$-symbol which is not an identity on $\mathbb{C}_{1/2}$ and $\psi\in H^\infty(\mathbb{C}_0)$ such that $C_{\psi,\phi}\in \mathbb{B}(\mathcal{H}^2)$. Then $0\in \overline{W\left(\displaystyle C_{\psi,\phi}\right)}.$
\end{corollary}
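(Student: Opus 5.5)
This is the case $n=1$ of Theorem \ref{sum}\ref{ii}. Setting $\phi_1=\phi$ and $\psi_1=\psi$, the hypotheses of Corollary \ref{closure} are exactly those of part \ref{ii}: $\phi$ is a non-identity $c_0$-symbol, $\psi\in H^\infty(\mathbb{C}_0)$, and $C_{\psi,\phi}$ is bounded. The conclusion $0\in\overline{W(C_{\psi,\phi})}$ then follows at once.

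For a self-contained argument I would instead run the kernel computation directly, in these steps.

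\emph{Choosing a boundary point.} Since $\phi$ is not the identity, the analytic function $\phi(s)-s$ is not identically zero on $\mathbb{C}_0$. Its zeros are therefore isolated, so only countably many lie on the line $L_{1/2}$. Pick $a\in L_{1/2}$ with $\phi(a)\neq a$, and set $a_\epsilon=a+\epsilon$.

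\emph{Evaluating the numerical range on kernels.} By the adjoint formula $C_{\psi,\phi}^*K_\omega=\overline{\psi(\omega)}K_{\phi(\omega)}$, the normalized reproducing kernels give
$$w_\epsilon=\left\langle C_{\psi,\phi}\frac{K_{a_\epsilon}}{\|K_{a_\epsilon}\|},\frac{K_{a_\epsilon}}{\|K_{a_\epsilon}\|}\right\rangle=\frac{\psi(a_\epsilon)\,\zeta\big(\overline{a_\epsilon}+\phi(a_\epsilon)\big)}{\zeta(2\Re a_\epsilon)}\in W(C_{\psi,\phi}).$$

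\emph{Passing to the limit.} As $\epsilon\to0$, the denominator $\zeta(1+2\epsilon)$ tends to $\infty$ because of the pole of $\zeta$ at $1$. For the numerator, $\psi$ is bounded. Since $\overline{a}=1-a$ on $L_{1/2}$, we have $\overline{a}+\phi(a)\neq 1$ whenever $\phi(a)\neq a$, so the numerator stays bounded. Hence $w_\epsilon\to0$, which gives $0\in\overline{W(C_{\psi,\phi})}$.

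\emph{Main obstacle.} The delicate step is justifying boundedness of the numerator. One needs $\phi$ to extend continuously to $a$, and $\psi(a_\epsilon)$ to stay bounded, which is where $\psi\in H^\infty(\mathbb{C}_0)$ is used. Continuity of $\phi$ at $a$ is guaranteed because $\phi$ extends analytically to $\mathbb{C}_0$ by Theorem \ref{bdd}, so $\overline{a_\epsilon}+\phi(a_\epsilon)\to\overline{a}+\phi(a)\neq1$. Since $\zeta$ is meromorphic with its only pole at $1$, this keeps the numerator bounded. In the paper this is handled by simply invoking Theorem \ref{sum}\ref{ii} with $n=1$.
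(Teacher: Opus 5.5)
Your proposal is correct and matches the paper, which obtains this corollary simply by taking $n=1$ in Theorem \ref{sum}(ii). Your self-contained kernel computation is exactly the paper's proof of that part: you choose $a\in L_{1/2}$ with $\phi(a)\neq a$, evaluate on the normalized kernels at $a+\epsilon$, and use the pole of $\zeta$ at $1$. You also make the continuity of the extended $\phi$ at $a$ slightly more explicit than the paper does.
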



\noindent The next proposition provides the numerical range of $C_{\psi,\phi}$ when $\phi$ is assumed to be constant. The following results are required to prove our main result.
\begin{proposition}\label{constant}
    For $\phi\equiv c_1(\in\mathbb{C}_{1/2})$ and $\psi\in \mathcal{H}^2$, the following statements hold:
    \begin{enumerate}[label=(\roman*)]
        \item If $K_{c_1}=\mu\psi$ for some $\mu\neq 0$, then $W(C_{\psi,\phi})=[0,\overline{\mu}\|\psi\|^2]$.
        \item If $K_{c_1}\perp \psi$, then $W(C_{\psi,\phi})$ is the closed disk centered at origin and radius $\displaystyle\frac{\|\psi\| (\zeta(2\Re c_1))^{1/2} }{2}$.
        \item Otherwise, $W(C_{\psi,\phi})$ is the closed elliptic disk with foci at $0$ and $\psi(c_1)$.
        
    \end{enumerate}
\end{proposition}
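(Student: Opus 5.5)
With $\phi\equiv c_1$ we have $C_{\psi,\phi}f=f(c_1)\psi=\langle f,K_{c_1}\rangle\psi$. So $C_{\psi,\phi}$ is the rank-one operator $\psi\otimes K_{c_1}$, i.e. $f\mapsto\langle f,K_{c_1}\rangle\psi$. The plan is to reduce the computation of $W(C_{\psi,\phi})$ to the numerical range of a $2\times 2$ matrix and then apply the elliptical range theorem. (If $\psi=0$ the operator is zero; this case is excluded implicitly, or else $W=\{0\}$.)

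\textbf{Reduction to two dimensions.} Let $M=\operatorname{span}\{\psi,K_{c_1}\}$. For a unit vector $x$, write $x=y+z$ with $y\in M$ and $z\perp M$. Since $z\perp K_{c_1}$ and $z\perp\psi$, we get
$$\langle Tx,x\rangle=\langle y,K_{c_1}\rangle\langle\psi,y\rangle=\langle Ty,y\rangle .$$
Here $\|y\|\le 1$. Hence $W(T)$ is the set $\{t\lambda: \lambda\in W(T|_M),\ t\in[0,1]\}$, taken together with $0$ when $\dim\mathcal H^2>\dim M$, which always holds. In each case $0\in W(T|_M)$: if $\dim M=2$ we may take $y\perp K_{c_1}$, and if $\dim M=1$ the set computed below visibly contains $0$. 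By convexity of $W(T|_M)$ this gives $W(T)=W(T|_M)$. Since $M$ is finite dimensional, $W(T|_M)$ is closed, so all three closedness claims come for free.

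\textbf{Case (i).} $M$ is spanned by $\psi$, and $T\psi=\psi(c_1)\psi$ with $\psi(c_1)=\langle\psi,K_{c_1}\rangle=\overline{\mu}\|\psi\|^2$. A scalar on a one-dimensional space has numerical range $\{\overline\mu\|\psi\|^2\}$. Adding the radial scaling from the reduction step gives the segment $[0,\overline\mu\|\psi\|^2]$.

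\textbf{Cases (ii) and (iii).} Take the orthonormal basis $u=K_{c_1}/\|K_{c_1}\|$, $v\perp u$ of $M$, and write $\psi=\alpha u+\beta v$. Then $Tu=\|K_{c_1}\|\psi$ and $Tv=0$, so $T|_M$ has the matrix
$$\begin{pmatrix}\alpha\|K_{c_1}\| & 0\\ \beta\|K_{c_1}\| & 0\end{pmatrix}.$$
Its eigenvalues are $0$ and $\alpha\|K_{c_1}\|=\langle\psi,K_{c_1}\rangle=\psi(c_1)$.

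By the elliptical range theorem, $W$ is the closed elliptical disk with foci $0$ and $\psi(c_1)$. Its minor axis has length $|\beta|\|K_{c_1}\|$. It is a genuine elliptical disk in case (iii), since then $\beta\neq0$.

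In case (ii), $\alpha=0$, so both foci coincide at $0$. The set is then the disk of radius $|\beta|\|K_{c_1}\|/2=\|\psi\|\,\zeta(2\Re c_1)^{1/2}/2$, using $\|K_{c_1}\|=\zeta(2\Re c_1)^{1/2}$.

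The main point needing care is the reduction step: one must check that the orthogonal component $z$ only shrinks $\langle Ty,y\rangle$ radially. In the one-dimensional case (i), one must also justify that $0$ is attained, by taking $x\perp\psi$. Everything else is the standard $2\times2$ computation.
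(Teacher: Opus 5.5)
Your proof is correct. It starts from the same observation as the paper, that $C_{\psi,\phi}f=\langle f,K_{c_1}\rangle\psi$ is rank one, but from there it takes a more self-contained route. The paper stops at that observation and cites \cite[Proposition 2.5]{bourdon2002zero}, which describes the numerical range of any rank-one operator $f\mapsto\langle f,v\rangle u$ as a closed elliptical disk with foci $0$ and $\langle u,v\rangle$, degenerating to a segment when $u$ and $v$ are parallel. You instead reprove that fact in this setting:
\begin{enumerate}[(a)]
\item restrict to the invariant subspace $M=\text{span}\{\psi,K_{c_1}\}$;
\item show that the component orthogonal to $M$ only scales $\langle Ty,y\rangle$ radially;
\item apply the elliptical range theorem to the explicit $2\times 2$ matrix.
\end{enumerate}
The paper's route is a one-line citation. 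Yours makes the three cases transparent and also produces the minor axis $|\beta|\,\|K_{c_1}\|=\bigl(\|\psi\|^2\zeta(2\Re c_1)-|\psi(c_1)|^2\bigr)^{1/2}$, which case (iii) of the statement leaves unspecified.

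One wording slip needs fixing. In case (i) you have $W(T|_M)=\{\overline{\mu}\|\psi\|^2\}$, which does not contain $0$, because $\mu\neq 0$ forces $\psi\neq 0$. So your claim that $0\in W(T|_M)$ ``in each case'', and hence $W(T)=W(T|_M)$, is false there. Your treatment of case (i) does not actually rely on it: you apply the radial description $W(T)=\{t\lambda:\ t\in[0,1],\ \lambda\in W(T|_M)\}$ directly, which correctly gives $[0,\overline{\mu}\|\psi\|^2]$. Just restrict the identity $W(T)=W(T|_M)$ to the cases $\dim M=2$ or $\psi=0$.
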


\begin{proof}
   For any $f\in \mathcal{H}^2$, we have
    $$C_{\psi,\phi}f=\psi\cdot f(c_1)=\langle f,K_{c_1}\rangle\psi.$$
    Thus, $C_{\psi,\phi}$ is an operator with rank one. Hence the result follows from \cite[Proposition 2.5]{bourdon2002zero}.
\end{proof}

\noindent The case when $\phi$ is non-constant $c_0$-symbol with $c_0=0$ is considered in the next result.

\begin{proposition}\label{c0=0}
    Suppose that $\phi$ is a non-constant $c_0$-symbol and $\psi\in \mathcal{H}^2$ such that $C_{\psi,\phi}$ is bounded. If $c_0=0$, then $0$ lies in the interior of $W(C_{\psi,\phi})$.
\end{proposition}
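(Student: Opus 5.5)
Since $W(C_{\psi,\phi})$ is convex, it is enough to find points of $W(C_{\psi,\phi})$ that surround the origin. By the Toeplitz–Hausdorff theorem, it suffices to show that for every direction $\theta\in[0,2\pi)$ there is a unit vector $f$ with $\Re\big(e^{-i\theta}\langle C_{\psi,\phi}f,f\rangle\big)>0$. Equivalently, the real part of no rotation $e^{i\theta}C_{\psi,\phi}$ is negative semidefinite. Applying this to both $\theta$ and $\theta+\pi$ shows that $W$ is not contained in any closed half-plane through $0$, and convexity then puts $0$ in the interior.

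The key structural fact for $c_0=0$ is the following. We have $\phi(s)=c_1+\sum_{n\ge2}c_n n^{-s}$ and $\phi(\mathbb C_0)\subset\mathbb C_{1/2}$. Hence for any $f=\sum a_m m^{-s}$, the function $f\circ\phi$ is a Dirichlet series whose constant term is $f(c_1)$, where $c_1=\lim_{\Re s\to\infty}\phi(s)\in\overline{\mathbb C_{1/2}}$; in fact $c_1\in\mathbb C_{1/2}$ by the open mapping principle. Multiplication by $\psi$ can only push frequencies upward. Concretely, if $\psi=\sum b_n n^{-s}$, then
$$\langle C_{\psi,\phi}f,e_k\rangle=\sum_{jl=k}b_j\,\widehat{(f\circ\phi)}(l).$$

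The plan is to test the operator on two-dimensional subspaces spanned by $e_N$ and $e_M$ with $N$ large. The diagonal entry satisfies
$$\langle C_{\psi,\phi}e_N,e_N\rangle=\sum_{jl=N}b_j\,\widehat{(\phi_N)}(l),$$
where $\phi_N:=N^{-\phi}$. Since $N^{-\phi(s)}=N^{-c_1}\exp\big(-\log N\sum_{n\ge2}c_nn^{-s}\big)$, all coefficients of $e_N\circ\phi$ carry the factor $N^{-c_1}$. As $\Re c_1>1/2$, this gives decay of order $N^{-1/2-\delta}$ times at most a polynomial in $\log N$. Since $\phi$ is nonconstant, the coefficients of $e_N\circ\phi$ are nonzero and spread over many frequencies. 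Meanwhile $e_N$ has norm one.

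Compressing $C_{\psi,\phi}$ to $\operatorname{span}\{e_1,e_N\}$ or $\operatorname{span}\{e_p,e_q\}$ gives a $2\times2$ matrix $\begin{pmatrix}\alpha&\beta\\ \gamma&\delta\end{pmatrix}$. The numerical range of this matrix is an elliptical disk whose minor axis is governed by $|\beta-\bar\gamma|$, or by $|\beta|+|\gamma|$ when the diagonal is small. If we can choose indices so that $\alpha,\delta\to0$ while some off-diagonal entry stays bounded away from $0$, then the ellipse contains a disk around $(\alpha+\delta)/2$ of radius roughly $\tfrac12\big||\beta|-|\gamma|\big|$. This uses the standard fact that $W\begin{pmatrix}a&b\\0&d\end{pmatrix}$ is an ellipse with minor axis $|b|$. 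The disk then contains $0$ for $N$ large. By the compression property, this disk is contained in $W(C_{\psi,\phi})$.

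For the concrete choice, take $\psi$ with some nonzero coefficient $b_{j_0}$ at the smallest index $j_0$. Use the pair $e_1$ and $e_{j_0}$ and their images. We have $C_{\psi,\phi}e_1=\psi$, so $\langle C_{\psi,\phi}e_1,e_{j_0}\rangle=b_{j_0}\neq0$. The other corner entry $\langle C_{\psi,\phi}e_{j_0},e_1\rangle$ equals $b_1\,j_0^{-c_1}$, which is $0$ unless $j_0=1$. This is not yet enough, so I would instead pair $e_1$ with $e_N$ for large $N$: then $\langle C e_N, e_1\rangle=b_1N^{-c_1}\to0$ and $\langle Ce_N,e_N\rangle\to0$, while $\langle Ce_1,e_N\rangle=b_N$ may be small too.

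This is the main obstacle: producing a fixed off-diagonal entry of size $\gg$ the diagonal ones. My fallback is to use reproducing kernels instead. For $\omega$ with $\Re\omega\downarrow1/2$, the normalized kernels $k_\omega$ satisfy
$$\langle Ck_\omega,k_\omega\rangle=\psi(\omega)\,\zeta(\bar\omega+\phi(\omega))/\zeta(2\Re\omega)\to0,$$
because $\phi(\omega)$ stays in a compact subset of $\mathbb C_{1/2}$ for $\omega$ near a point of $L_{1/2}$ (as $\phi(\mathbb C_0)\subset\mathbb C_{1/2}$). Then I would pair $k_\omega$ with a fixed vector $g$ such that $\langle Cg,k_\omega\rangle=\psi(\omega)g(\phi(\omega))/\|K_\omega\|\to0$ but $\langle Ck_\omega,g\rangle=\langle k_\omega,C^*g\rangle$ does not vanish. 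The hope is to choose $g=C K_{\lambda}$-type vectors so that the asymmetry produces an ellipse of nondegenerate minor axis around a point tending to $0$. The delicate estimate is the non-vanishing of this off-diagonal term after normalization, which is where the nonconstancy of $\phi$ must enter.
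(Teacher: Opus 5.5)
Your proposal does not reach a proof. The decisive step is a $2\times2$ compression whose diagonal is small while its off-diagonal entries differ in modulus by a fixed amount, and you leave it open in both versions. With monomials it fails as set up: pairing $e_1$ with $e_N$ does not even make the diagonal small, since $\langle C_{\psi,\phi}e_1,e_1\rangle=b_1$ for every $N$. In the kernel fallback, $\langle C_{\psi,\phi}k_\omega,k_\omega\rangle\to0$ only gives $0\in\overline{W(C_{\psi,\phi})}$.

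More seriously, your plan never uses $c_0=0$ in an essential way, and any correct argument must. Take $\phi(s)=s+c_1$ with $c_1>0$ and $\psi\equiv1$. Then $\phi$ is non-constant and the kernel estimate works equally well. But $C_{\psi,\phi}$ is diagonal in $(e_n)$ with eigenvalues $n^{-c_1}$, so $W(C_{\psi,\phi})=(0,1]$ and $0$ is not interior.

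The missing ingredient is non-injectivity of $\phi$. When $c_0=0$, $\phi$ is itself a Dirichlet series. By Bohr's almost-periodicity theory (the paper cites Favard), it is not injective on any vertical strip of absolute convergence. Hence there are $a\neq b$ in $\mathbb{C}_{1/2}$ with $\phi(a)=\phi(b)$.

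Write $C=C_{\psi,\phi}$. If $\psi$ has no zeros, the vector $u=\overline{\psi(b)}K_a-\overline{\psi(a)}K_b$ lies in $\ker C^*$, and it is non-zero because $K_a,K_b$ are linearly independent. If $\psi(s_0)=0$, take $u=K_{s_0}$ instead. Meanwhile $C$ is injective, because $\phi$ is non-constant and $\psi\not\equiv0$; the statement needs the latter tacitly.

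The paper finishes by citing Bourdon--Shapiro: $0$ is an eigenvalue of $C^*$ that is not a normal eigenvalue, so it cannot lie on $\partial W(C^*)$. Your own compression framework also finishes it, once you use the right subspace. Normalize $u$. Since $\langle Cu,u\rangle=\langle u,C^*u\rangle=0$, the vector $w=Cu/\|Cu\|$ is a unit vector orthogonal to $u$. Also $\langle Cw,u\rangle=\langle w,C^*u\rangle=0$. So the compression of $C$ to $\operatorname{span}\{u,w\}$ is
\[
\begin{pmatrix} 0 & 0\\ \|Cu\| & \langle Cw,w\rangle \end{pmatrix}.
\]
Its numerical range is the elliptical disk with foci $0$ and $\langle Cw,w\rangle$ and minor axis $\|Cu\|>0$. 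That disk contains its focus $0$ in its interior, and therefore so does $W(C)$.
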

\begin{proof}
    Since $\phi$ is non-constant, $C_{\psi,\phi}$ is injective. If $\psi$ has a zero at $s_0\in \mathbb{C}_{1/2}$, then the range of $C_{\psi,\phi}$ is contained in the closed subspace of functions that vanish at $s_0$ and hence is not dense. We know that, if a bounded linear operator $T$ is injective and does not have dense range, then $0$ is in the interior of $W(T)$ and thus we get the conclusion. Now suppose $\psi$ has no zeros in $\mathbb{C}_{1/2}$. Since $c_0=0$, we have $\phi\in \mathcal{D}$. By the result from the theory of analytic, almost-periodic functions \cite[pg 131]{favard55455leccons}, a Dirichlet series that converges absolutely in a half-plane $\Re s>\theta$ is not injective on any vertical strip $\alpha<\Re s<\beta$ in this half-plane. Since $\phi$ is non-constant, we can find $a,b\in \mathbb{C}_{1/2}$ such that $a\neq b$ and $\phi(a)=\phi(b)$. Since $\psi$ has no zeros in $\mathbb{C}_{1/2}$, it follows that $\psi(a),\psi(b)\neq 0$ and hence
    $$C_{\psi,\phi}^*(\overline{\psi(b)}K_a-\overline{\psi(a)}K_b)=\overline{\psi(b)}\overline{\psi(a)}K_{\phi(a)}-\overline{\psi(a)}\overline{\psi(b)}K_{\phi(b)}=0.$$
    Thus, $0$ is an eigenvalue of $C_{\psi,\phi}^*$ but not a normal eigenvalue of $C_{\psi,\phi}^*$ since $C_{\psi,\phi}$ is injective. Thus by \cite[Corollary 3.4]{bourdon2002zero}, $0$ does not lie on the boundary of $W(C_{\psi,\phi}^*)$. Therefore, $0$ lies in the interior of $W(C_{\psi,\phi}^*)$ and hence $0$ lies in the interior of $W(C_{\psi,\phi})$.
\end{proof}


\noindent Now, we consider the case when $\phi$ is not of the form $\phi(s)=s+c_1$ where $\Re c_1\geq 0$. This reduces to the case of the composition operators \cite[Theorem 9]{finet2004numerical} when $\psi\equiv 1$.

\begin{theorem}\label{interior main}
     Suppose that $\phi$ is a non-constant $c_0$-symbol and $\psi=\displaystyle\sum_{n=1}^\infty \psi_n n^{-s}\in \mathcal{H}^2$ such that $C_{\psi,\phi}$ is bounded. If $\phi$ is not of the form $\phi(s)=s+c_1$ where $\Re c_1\geq 0$, then $0$ lies in $W(C_{\psi,\phi})$. In particular, if $\psi_1\neq0$, then $0$ lies in the interior of $W(C_{\psi,\phi})$.
\end{theorem}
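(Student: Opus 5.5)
We split according to the value of $c_0$ and the first Dirichlet coefficient $\psi_1$ of $\psi$, working in the orthonormal basis $e_n(s)=n^{-s}$ and using $2\times 2$ (or small) compressions of $T=C_{\psi,\phi}$.

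\emph{Case $c_0=0$.} This is exactly Proposition \ref{c0=0}, which already gives $0$ in the interior of $W(T)$. So assume $c_0\geq 1$.

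\emph{Structure of $Te_n$.} Write $\phi(s)=c_0s+c_1+\sum_{k\geq 2}c_kk^{-s}$. Then
$$n^{-\phi(s)}=n^{-c_1}\,n^{-c_0s}\exp\Big(-\log n\sum_{k\geq2}c_kk^{-s}\Big).$$
This is a Dirichlet series whose leading term is $n^{-c_1}e_{n^{c_0}}$ and whose other terms have indices $n^{c_0}m$ with $m\geq 2$. Multiplying by $\psi$ gives
$$Te_n=\psi_1n^{-c_1}e_{n^{c_0}}+\big(\text{terms with indices }n^{c_0}m,\ m\geq2\big).$$
In particular $\langle Te_n,e_1\rangle=0$ for $n\geq2$, and $\langle Te_1,e_1\rangle=\psi_1$.

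The first assertion, $0\in W(T)$, then follows as below.
\begin{itemize}
\item If $\psi_1=0$, then $\langle Te_1,e_1\rangle=0$.
\item If $c_0\geq2$, then for any $n\geq2$ every index occurring in $Te_n$ is at least $n^{c_0}>n$, so $\langle Te_n,e_n\rangle=0$.
\item In the remaining case $c_0=1$ and $\psi_1\neq0$, membership follows from the interior argument below.
\end{itemize}

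\emph{Interior when $\psi_1\neq0$ and $c_0\geq2$.} Fix $m\geq2$ and put $n=m^{c_0}$. Compress $T$ to $\mathrm{span}\{e_m,e_n\}$. Both diagonal entries vanish by the index count above, and $\langle Te_m,e_n\rangle=\psi_1m^{-c_1}\neq0$. The compression is therefore the matrix $\begin{pmatrix}0&0\\ \alpha&0\end{pmatrix}$ with $\alpha\neq0$. Its numerical range is the closed disk of radius $|\alpha|/2$ centred at $0$, and it sits inside $W(T)$, so $0$ is interior.

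\emph{Interior when $\psi_1\neq0$ and $c_0=1$.} Here the hypothesis says $\eta$ is non-constant, so there is a smallest $j\geq2$ with $c_j\neq0$. The diagonal entries are now $\langle Te_n,e_n\rangle=\psi_1n^{-c_1}\neq0$, so a single compression no longer suffices. This is the main obstacle, especially when $\Re c_1=0$ and the diagonal does not decay.

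To handle it, I take $x=e_n+te_{nj}$ with $|t|=1$ and expand $\langle Tx,x\rangle$. The entry $\langle Te_n,e_{nj}\rangle$ equals $n^{-c_1}\big(-\psi_1c_j\log n+\psi_j\big)$, while all other entries are $O(|n^{-c_1}|)$ with constants independent of $n$. Dividing by $\psi_1n^{-c_1}\|x\|^2$ gives
$$\frac{\langle Tx,x\rangle}{\psi_1n^{-c_1}\|x\|^2}=-\tfrac12\,\overline t\,c_j\log n+O(1),$$
where the $O(1)$ is uniform in $t$ on the unit circle. For $n$ large, the point $-\tfrac12\overline tc_j\log n$ traces a circle of radius $\tfrac12|c_j|\log n$, which eventually dominates the bounded error. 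Hence, as $t$ runs over the unit circle, these normalized values wind around $0$. By convexity of $W(T)$ (the factor $\psi_1n^{-c_1}$ is a fixed nonzero rotation-dilation), $W(T)$ then contains a neighbourhood of $0$. This also settles $0\in W(T)$ in this case.

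The delicate point is making the $O(1)$ bound uniform in $n$: one must check that the coefficients of $n^{-\eta}$ at index $j$, together with those of $\psi$, contribute only bounded multiples of $n^{-c_1}$ apart from the explicit $\log n$ term.
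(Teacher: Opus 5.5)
Your argument is correct and follows the paper's proof: Proposition \ref{c0=0} for $c_0=0$; a nilpotent $2\times 2$ compression onto $\mathrm{span}\{e_m,e_{m^{c_0}}\}$ for $c_0\geq 2$ (the paper takes $m=2$); and for $c_0=1$ a compression onto $\mathrm{span}\{e_n,e_{nj}\}$ whose off-diagonal entry grows like $\log n$ (the paper takes $n=r^p$, $j=r$, and cites Lemma 10 of Finet and Queff\'elec where you use the circle-plus-convexity argument). Your off-diagonal entry $n^{-c_1}(\psi_j-\psi_1c_j\log n)$ is in fact more accurate than the paper's, which drops the $\psi_r$ term, and the uniformity you flag is immediate: the other three entries are exactly $\psi_1n^{-c_1}$, $0$ and $\psi_1(nj)^{-c_1}$, and minimality of $j$ makes the coefficient of $j^{-s}$ in $\exp\bigl(-\log n\sum_{k\geq j}c_kk^{-s}\bigr)$ exactly $-c_j\log n$.
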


\begin{proof}
    Let $\phi(s)=c_0s+\eta(s)$, where $c_0\in \mathbb{N}$ and $\eta(s)=\displaystyle\sum_{n=1}^\infty c_n n^{-s} \in\mathcal{D}$ and let $\psi(s)=\displaystyle\sum_{n=1}^\infty \psi_n n^{-s}\in \mathcal{H}^2$. If $c_0=0$, we get the required result from Proposition \ref{c0=0}. Now, suppose $c_0\geq 2$. From \cite{gordon1999composition}, we have for any $n\geq 2$,
    $$n^{-\phi(s)}=n^{-c_0 s}n^{-c_1}\left(1+\sum_{m=2}^\infty a_{m}^{(n)} m^{-s}\right).$$
    Consider $f(s)=d_1 2^{-s}+ d_2 2^{-c_0 s}$ with $|d_1|^2+|d_2|^2=1$. Then
    \begin{align*}
        &C_{\psi,\phi} f(s)\\
        &= \sum_{n=1}^\infty \psi_n n^{-s}\left(d_1 2^{-c_0s}2^{-c_1}\left(1+\sum_{m=2}^\infty a_{m}^{(2)} m^{-s}\right)+d_2 (2^{c_0})^{-c_0s}(2^{c_0})^{-c_1}\left(1+\sum_{m=2}^\infty a_{m}^{(2^{c_0})} m^{-s}\right)\right)
    \end{align*} 
    and hence $\langle C_{\psi,\phi} f,f\rangle=\psi_1 d_1 \overline{d_2}2^{-c_1}$. If $A=\begin{bmatrix}
        0 & 0\\
        2^{-c_1}\psi_1 &0
    \end{bmatrix}$ on $\mathbb{C}^2$, then $W(A)=\{2^{-c_1}\psi_1x\overline{y}:|x|^2+|y|^2=1\}$ and hence $W(A)\subseteq W(C_{\psi,\phi})$. Therefore, from \cite{gustafson1997numerical}, we get, $W(C_{\psi,\phi})$ contains the closed disk centered at origin and radius $|\psi_1|2^{-\Re c_1-1}$ and the result follows. Now, let $c_0=1$ and $\phi(s)=s+c_1+c_r r^{-s}+\cdots $, where $c_r\neq 0$. From \cite{gordon1999composition}, for any $n\geq 2$, we have 
    \begin{align*}
        n^{-\phi(s)}&=n^{-s}n^{-c_1}\prod_{m=r}^\infty\exp(-c_m m^{-s} \log n)\\
        &=n^{-s}n^{-c_1}\prod_{m=r}^\infty\left(1-c_mm^{-s}\log n+ \frac{(-c_mm^{-s}\log n)^2}{2}+\cdots\right)\\
        &=n^{-s}n^{-c_1}(1-c_r r^{-s} \log n+\cdots).
    \end{align*}
    Consider, for any integer $p\geq1$, $g(s)=d_3(r^p)^{-s}+d_4(r^{p+1})^{-s}$ where $|d_3|^2+|d_4|^2=1$. Then
    \begin{align*}
        &C_{\psi,\phi} g(s)\\
        &= \sum_{n=1}^\infty \psi_n n^{-s}\left(d_3 r^{-ps}r^{-pc_1}(1-c_r r^{-s} \log r^p+\cdots) +d_4r^{-(p+1)s}r^{-(p+1)c_1}(1-c_r r^{-s} \log r^{(p+1)}+\cdots) \right)
    \end{align*}
    and hence \begin{align*}
        \langle C_{\psi,\phi} g,g\rangle&=\psi_1|d_3|^2r^{-pc_1}+\psi_1|d_4|^2r^{-(p+1)c_1}-\psi_1d_3\overline{d_4}r^{-pc_1}c_r \log r^p\\
        &=\psi_1 r^{-pc_1}\left(|d_3|^2+|d_4|^2 r^{-c_1}-pc_rd_3\overline{d_4}\log r\right).
    \end{align*}
     If $B_p=\begin{bmatrix}
        1 & 0\\
        -p c_r\log r &r^{-c_1}
    \end{bmatrix}$ on $\mathbb{C}^2$, then $W(B_p)=\{|x|^2+r^{-c_1}|y|^2-p c_r x\overline{y}\log r :|x|^2+|y|^2=1\}$ and hence $\psi_1 r^{-pc_1}W(B_p)\subseteq W(C_{\psi,\phi})$. We take $p$ to be large enough such that $1+|r^{-c_1}|<p |c_r|\log r$, then from \cite[Lemma 10]{finet2004numerical}, it follows that $0$ lies in the interior of $W(B_p)$. Hence, we get the result.

\end{proof}

\begin{example}
    Suppose $\phi(s)=2s+1$ be analytic self-map on $\mathbb{C}_{1/2}$ and $\psi(s)=3^{-s}$ be analytic map on $\mathbb{C}_{1/2}$. Then, $\phi$ is a $c_0-$symbol, $\psi\in \mathcal{M}$ and hence $C_{\psi,\phi}$ is bounded. From Theorem \ref{interior main}, we obtain that $0$ lies in $W(C_{\psi,\phi})$. 
\end{example}

Now, we deal with the case where $\phi(s)=s+c_1$ where $\Re c_1\geq 0$.
\begin{proposition}
    Suppose that $\phi$ is of the form $\phi(s)=s+c_1$ where $\Re c_1\geq 0$ and $\psi(s)=\displaystyle\sum_{n=1}^\infty \psi_n n^{-s}\in \mathcal{H}^2$ such that $C_{\psi,\phi}\in\mathbb{B}(\mathcal{H}^2)$. If $\psi_1=0$, then $0$ lies in $W(C_{\psi,\phi})$.
\end{proposition}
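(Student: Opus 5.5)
Because $\phi(s)=s+c_1$, the composition operator is diagonal in the basis $e_n(s)=n^{-s}$: $C_\phi e_n=n^{-c_1}e_n$. Hence $C_{\psi,\phi}e_n=n^{-c_1}\psi\, e_n=n^{-c_1}\sum_{k\ge1}\psi_k (kn)^{-s}$. The plan is to evaluate the quadratic form $\langle C_{\psi,\phi}f,f\rangle$ on a single basis vector and find it is already $0$.

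Take $f=e_1\equiv 1$, which has norm one. Then
\[
\langle C_{\psi,\phi}e_1,e_1\rangle=\langle \psi\cdot(e_1\circ\phi),e_1\rangle=\langle \psi ,e_1\rangle=\psi_1=0,
\]
so $0\in W(C_{\psi,\phi})$ immediately. More generally, for any $n$,
\[
\langle C_{\psi,\phi}e_n,e_n\rangle=n^{-c_1}\langle \psi e_n,e_n\rangle=n^{-c_1}\psi_1=0,
\]
because multiplication by $e_n$ shifts coefficient indices from $k$ to $kn$. This confirms the conclusion.

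The only point needing care is that $\psi\cdot 1=\psi$ lies in $\mathcal{H}^2$ with first coefficient $\psi_1$. This holds because $\psi\in\mathcal{H}^2$ by hypothesis (boundedness forces $\psi=C_{\psi,\phi}1\in\mathcal{H}^2$), and $1\circ\phi=1$. There is no real obstacle here.

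If one also wanted more, such as $0$ lying in the interior, I would study the $2\times2$ compressions to $\mathrm{span}\{e_1,e_m\}$ with $\psi_m\neq0$. These give the matrix $\begin{bmatrix}0&0\\ \psi_m&0\end{bmatrix}$ (the $(1,1)$ and $(2,2)$ entries vanish since $\psi_1=0$, and the $(1,2)$ entry vanishes since the coefficient of $1^{-s}$ in $m^{-c_1}\psi e_m$ is zero). Its numerical range is a disk of radius $|\psi_m|/2$, so its interior contains $0$. However, only membership is claimed, and the single-vector computation above suffices.
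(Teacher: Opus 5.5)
Your proof is correct and is essentially the paper's argument: the paper evaluates the quadratic form at the single unit vector $2^{-s}$ and gets $\psi_1 2^{-c_1}=0$, whereas you use $e_1\equiv 1$. Your general identity $\langle C_{\psi,\phi}e_n,e_n\rangle=n^{-c_1}\psi_1$, which follows from $C_\phi e_n=n^{-c_1}e_n$, covers the paper's choice $n=2$ as a special case.
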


\begin{proof}
    Consider $f(s)=2^{-s}\in \mathcal{H}^2$. Then $\|f\|=1$ and $$\langle C_{\psi,\phi}f,f\rangle=\left\langle\left(\sum_{n=1}^\infty \psi_n n^{-s}\right)2^{-s} 2^{-c_1}, 2^{-s}\right\rangle=\psi_1 2^{-c_1}.$$
    Hence the result follows.
\end{proof}

We now consider the case when $\phi(s)=s+c_1$ where $\Re c_1\geq 0$ and $\psi_1\neq 0$. In that case, $0$ may or may not belong to $W(C_{\psi,\phi})$. This is shown by the following proposition for non-zero constant $\psi$. The result also gives the numerical range of normal weighted composition operators on $\mathcal{H}^2$. 

\begin{proposition}\label{normal}
    Suppose that $\phi$ is of the form $\phi(s)=s+c_1$ where $\Re c_1\geq 0$ and $\psi\equiv a\in \mathbb{C}_{1/2}$ such that $C_{\psi,\phi}$ is bounded. Then
    \begin{enumerate}[(a)]
        \item If $c_1=0$, then $W(C_{\psi,\phi})=\{a\}$.
        \item If $c_1>0$, then $W(C_{\psi,\phi})=(0,a]$.
        \item If $\Re c_1>0$ and $c_1\notin \mathbb{R}$, then $W(C_{\psi,\phi})$ is a closed polygon containing $0$ in its interior.
        \item If $\Re c_1=0$, then $W(C_{\psi,\phi})=\{z:|z|<|a|\}\cup\{a n^{-c_1}:n\geq 1\}$.
    \end{enumerate}
\end{proposition}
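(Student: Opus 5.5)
With $\phi(s)=s+c_1$ and $\psi\equiv a$, the operator is diagonal in the orthonormal basis $e_n(s)=n^{-s}$:
\[
C_{\psi,\phi}e_n = a\,n^{-(s+c_1)} = a\,n^{-c_1}e_n .
\]
So $C_{\psi,\phi}$ is a normal diagonal operator with eigenvalues $\lambda_n = a n^{-c_1}$, $n\ge 1$. For $f=\sum x_n e_n$ with $\sum|x_n|^2=1$ we get
\[
\langle C_{\psi,\phi}f,f\rangle = \sum_n |x_n|^2 \lambda_n .
\]
Hence $W(C_{\psi,\phi})$ is exactly the set of all convex combinations $\sum t_n\lambda_n$ with $t_n\ge 0$ and $\sum t_n=1$, including countably infinite ones. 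Every case then reduces to describing this convex set for the sequence $\lambda_n$, and the main tool is $|n^{-c_1}| = n^{-\Re c_1}$.

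\textbf{Cases (a) and (b).} If $c_1=0$, then $\lambda_n=a$ for all $n$, so $W=\{a\}$. If $c_1>0$ is real, then $\lambda_n=a n^{-c_1}$ runs through the points $t a$ with $t\in\{n^{-c_1}\}\subset(0,1]$, $t=1$ attained at $n=1$ and $t\to 0$. Infinite convex combinations with all weights on finitely many indices fill the segment between $a$ and $a N^{-c_1}$ for each $N$. Since $\sum t_n\lambda_n=0$ would require every $t_n=0$, the value $0$ is never attained. This gives $(0,a]$.

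\textbf{Case (c).} Here $\Re c_1>0$ and $c_1\notin\mathbb R$. Write $n^{-c_1}=n^{-\Re c_1}e^{-i\,\Im c_1\log n}$. The points $\lambda_n$ tend to $0$ while their arguments rotate, and $|\lambda_n|$ decreases strictly.
\begin{itemize}
\item I would first show that $0$ lies in the interior of the convex hull of finitely many $\lambda_n$. The arguments $-\Im c_1\log n$ (mod $2\pi$) form a dense set as $n$ ranges, since $\log n$ increases with steps tending to $0$. So one can pick finitely many $\lambda_n$ whose arguments are not contained in any closed half-plane through $0$.
\item Since $\lambda_n\to 0$, only finitely many $\lambda_n$ lie outside any given neighbourhood of $0$. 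The convex hull of the sequence is then the convex hull of finitely many extreme points $\lambda_n$, i.e.\ a polygon $P$.
\item Every $\lambda_n$ lies in $P$, and infinite convex combinations of points of a compact convex set stay in it, so $W\subseteq P$.
\item Conversely, every point of $P$, including its vertices and edges, is a finite convex combination of the $\lambda_n$, so $P\subseteq W$ and $W$ is closed.
\end{itemize}

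\textbf{Case (d) and the main obstacle.} Here $\Re c_1=0$ and $c_1=i\tau$ with $\tau\neq0$ (since $c_1=0$ is case (a)). All $\lambda_n=a e^{-i\tau\log n}$ lie on the circle $|z|=|a|$, with arguments dense in that circle. The convex hull of a dense subset of the circle contains the open disk, because each interior point lies in a triangle with vertices among the $\lambda_n$. A point on the circle is a convex combination of points of the circle only if all weights with nonzero value sit on that same point. Hence the boundary points of $W$ are exactly the attained $\lambda_n$, which gives $W=\{|z|<|a|\}\cup\{a n^{-c_1}\}$.

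I expect the main obstacle to be the density and finiteness arguments in (c) and (d). In (c) one must justify that only finitely many $\lambda_n$ are extreme points. This holds because once $0$ is interior to the hull of finitely many $\lambda_n$, all sufficiently small $\lambda_n$ fall inside that hull.
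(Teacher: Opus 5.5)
Your proof is correct, but it takes a different route from the paper. The paper's proof consists of two citations. It uses \cite[Proposition 2.5]{yao2021complex} to see that $C_{\psi,\phi}$ is normal, and then reads off the four cases from Finet and Queff\'elec's description of $W(C_\phi)$ for $\phi(s)=s+c_1$ \cite[Proposition 6]{finet2004numerical}. Along the way it implicitly uses $C_{\psi,\phi}=aC_\phi$ and $W(aC_\phi)=aW(C_\phi)$.

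You instead observe directly that $C_{\psi,\phi}$ is diagonal in the basis $e_n$ with eigenvalues $a n^{-c_1}$. Hence $W(C_{\psi,\phi})$ is exactly the set of countable convex combinations of these eigenvalues. You then settle each case by elementary planar convexity:
\begin{itemize}
\item in (b), by positivity of $n^{-c_1}$;
\item in (c), by density of $\Im c_1\log n$ modulo $2\pi$ together with $\lambda_n\to 0$;
\item in (d), by strict convexity of the circle.
\end{itemize}

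The paper's route is shorter, but all the content sits in the cited result. Yours is self-contained and in effect reproves the Finet--Queff\'elec proposition with the scalar $a$ built in. It also gives two things the paper handles separately or not at all:
\begin{itemize}
\item In (c) you get closedness of $W(C_{\psi,\phi})$ for free, since it is the convex hull of finitely many eigenvalues. The paper's subsequent corollary obtains closedness via compactness and \cite{de1972numerical}.
\item In (d) you correctly note that $c_1=0$ must be excluded, since otherwise (d) contradicts (a).
\end{itemize}

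Two small points in (d) should be tightened.
\begin{itemize}
\item The claim that each $z$ with $|z|<|a|$ lies in a triangle with vertices among the $\lambda_n$ is true but only asserted. It can be justified quickly. The closure of the convex hull of the $\lambda_n$ is the closed disk, and that hull has nonempty interior. A planar convex set with nonempty interior has the same interior as its closure, so the hull contains the open disk.
\item What you call ``boundary points of $W$'' should read ``points of $W$ on the circle $|z|=|a|$''. These are exactly the $\lambda_n$, by the equality case of the triangle inequality for the absolutely convergent sum $\sum t_n\lambda_n$.
\end{itemize}
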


\begin{proof}
     From \cite[Proposition 2.5]{yao2021complex}, we have, $C_{\psi,\phi}$ is normal for given $\phi$ and $\psi$. The result follows from \cite[Proposition 6]{finet2004numerical}.
\end{proof}

\noindent The following corollary provides the condition under which the numerical range of a weighted composition operator is closed.
\begin{corollary}
    Suppose that $\phi$ is of the form $\phi(s)=s+c_1$ where $\Re c_1\geq 0,\ c_1\notin\mathbb{R}$ and $\psi$ is a non-zero constant such that $C_{\psi,\phi}$ is bounded. Then $C_{\psi,\phi}$ is compact, and hence $W(C_{\psi,\phi})$ is closed.
\end{corollary}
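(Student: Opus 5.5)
The operator is diagonal in the orthonormal basis $\{e_n\}$, and we use this directly. With $\phi(s)=s+c_1$ and $\psi\equiv a$, we have
$$C_{\psi,\phi}e_n(s)=a\,n^{-(s+c_1)}=a\,n^{-c_1}e_n(s),$$
so $C_{\psi,\phi}$ is the diagonal operator with eigenvalues $\lambda_n=a\,n^{-c_1}$. (This is consistent with normality, as in Proposition \ref{normal}.) Since $\Re c_1\ge 0$ and $c_1\notin\mathbb{R}$, we have $c_1\neq 0$, which rules out case (a) of Proposition \ref{normal}. Note also that $|\lambda_n|=|a|\,n^{-\Re c_1}$.

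The key step is to show $\lambda_n\to 0$. A diagonal operator with eigenvalues tending to zero is the operator-norm limit of its finite-rank truncations $\sum_{n\le N}\lambda_n\langle\cdot,e_n\rangle e_n$. Indeed, the error has norm $\sup_{n>N}|\lambda_n|\to 0$. Hence $C_{\psi,\phi}$ is compact. If $\Re c_1>0$, then $|\lambda_n|=|a|n^{-\Re c_1}\to 0$ as required.

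Closedness of the numerical range then comes in two steps.
\begin{enumerate}[(i)]
\item Since the operator is normal, $\overline{W(C_{\psi,\phi})}$ is the closed convex hull of the spectrum, namely $\{\lambda_n\}\cup\{0\}$.
\item For a diagonal operator, $W$ consists of the convex combinations $\sum_n t_n\lambda_n$ with $t_n\ge0$ and $\sum_n t_n=1$, possibly with infinitely many nonzero $t_n$.
\end{enumerate}
It therefore suffices to show that $0\in W$. Then every point of the convex hull of $\{0\}\cup\{\lambda_n\}$ is a finite convex combination of points of $W$, so it lies in $W$ by convexity, and $W$ is closed. One could instead cite the known fact that a compact normal operator has closed numerical range exactly when $0\in W$, or equivalently when $0$ lies in the convex hull of the eigenvalues.

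To get $0\in W$, we use Proposition \ref{normal}(c), which says $0$ lies in the interior of $W$ when $\Re c_1>0$. A direct argument is also available. Writing $c_1=\sigma+i\tau$ with $\tau\neq0$, the arguments $-\tau\log n$ of $n^{-c_1}$ are not all contained in a half-plane, because $\log n$ increases with gaps tending to $0$. Hence $0$ is a finite convex combination of the $\lambda_n$.

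\textbf{Main obstacle: the boundary case $\Re c_1=0$.} The stated hypothesis $\Re c_1\ge 0$ allows it, but then $|\lambda_n|=|a|$ for every $n$, so $\lambda_n\not\to 0$ and the operator is not compact. Proposition \ref{normal}(d) also shows that $W$ is then not closed. The argument therefore needs $\Re c_1>0$, and I would read the corollary's hypothesis as $\Re c_1>0$, $c_1\notin\mathbb{R}$, which is exactly case (c) of Proposition \ref{normal}. I would flag this restriction explicitly when writing up the proof.
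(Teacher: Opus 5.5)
Your proof is correct under the hypothesis $\Re c_1>0$, and it takes a more elementary route than the paper. The paper argues purely by citation:
\begin{enumerate}[(a)]
\item compactness is taken from \cite[Theorem 2.12, Corollary 2.15]{yao2021complex};
\item closedness comes from \cite[Theorem 1]{de1972numerical}, which says a compact operator whose numerical range contains $0$ has closed numerical range;
\item $0\in W(C_{\psi,\phi})$ is supplied by Proposition \ref{normal}.
\end{enumerate}
You instead use that $C_{\psi,\phi}$ is diagonal with $\lambda_n=a\,n^{-c_1}$. Compactness is then immediate from $|\lambda_n|=|a|n^{-\Re c_1}\to 0$. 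Your closedness argument is in effect a proof of the cited theorem for normal operators, via $\overline{W}=\overline{\mathrm{conv}}\,\sigma(C_{\psi,\phi})$. This gives a self-contained proof that shows exactly where $\Re c_1>0$ enters.

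Your direct proof that $0\in W$, using that the directions $e^{-i\tau\log n}$ are dense on the circle, is also worth keeping. It makes the argument independent of Proposition \ref{normal}(c), which as stated already asserts that $W$ is a closed polygon.

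You should add one justification. To replace the \emph{closed} convex hull of $\{0\}\cup\{\lambda_n\}$ by the convex hull itself, note that this set is compact because $\lambda_n\to0$. The convex hull of a compact subset of $\mathbb{C}\cong\mathbb{R}^2$ is compact by Carath\'eodory's theorem. Your step (ii) is not needed.

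The obstacle you flag is a genuine error in the statement, and the paper's proof does not notice it. Take $c_1=i\tau$ with $\tau\neq0$, which is allowed by $\Re c_1\ge0$ and $c_1\notin\mathbb{R}$. Then $C_{\psi,\phi}$ is $a$ times a diagonal unitary, so it is not compact. Moreover $W(C_{\psi,\phi})$ is not closed: its closure is the disk $|z|\le|a|$, while by Proposition \ref{normal}(d) it meets the circle $|z|=|a|$ only in the countable set $\{a n^{-i\tau}\}$. No compactness result can cover this case, so the corollary must be read with $\Re c_1>0$, as you propose.
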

\begin{proof}
    From \cite[Theorem 2.12, Corollary 2.15]{yao2021complex}, it follows that $C_{\psi,\phi}$ is compact. Proposition \ref{normal} and \cite[Theorem 1]{de1972numerical} implies that $W(C_{\psi,\phi})$ is closed.
\end{proof}

If $\psi$ is non-constant with $\psi_1\neq0$, then $0$ may or may not lie in $W(C_{\psi,\phi})$. This is illustrated from the following examples.

\begin{example}
    Let $\phi(s)=s+2$ be analytic self-map on $\mathbb{C}_{1/2}$ and $\displaystyle\psi(s)=1+\frac{1}{4}2^{-s}$ be analytic map on $\mathbb{C}_{1/2}$. Then $0\notin W(C_{\psi,\phi})$.
\end{example}
\begin{proof}
    Let $f(s)=\displaystyle\sum_{n=1}^\infty f_n n^{-s}\in\mathcal{H}^2$ with $\|f\|=1$. Then 
    \begin{align*}
        \langle C_{\psi,\phi} f,f\rangle=\sum_{n=1}^\infty |f_n|^2 n^{-2}+\frac{1}{4}\sum_{n=1}^\infty f_n \overline{f_{2n}} n^{-2}=A+\frac{1}{4}B
    \end{align*}
    where $A=\displaystyle\sum_{n=1}^\infty |f_n|^2 n^{-2}$ and $B=\displaystyle\sum_{n=1}^\infty f_n \overline{f_{2n}} n^{-2}$. Since $|f_n\overline{f_{2n}}|\leq \displaystyle\frac{|f_n|^2+|f_{2n}|^2}{2}$, therefore we have $$|B|\leq\frac{5A}{2}\ \text{as}\ A>0.$$
    Thus $\displaystyle A+\frac{1}{4}B\neq 0$, hence $0\notin W(C_{\psi,\phi})$. But $\psi\in H^\infty(\mathbb{C}_0)$, thus Corollary \ref{closure} implies that $0\in\overline{W(C_{\psi,\phi})}$ and hence $0$ lies on the boundary of $W(C_{\psi,\phi})$.
\end{proof}

\begin{example}
    Let $\phi(s)=s+2$ be analytic self-map on $\mathbb{C}_{1/2}$ and $\displaystyle\psi(s)=1+4.2^{-s}$ be analytic map on $\mathbb{C}_{1/2}$. Then $0$ lies in the interior of $W(C_{\psi,\phi})$.
\end{example}
\begin{proof}
    Let $f_\theta(s)=\displaystyle\frac{1}{\sqrt{2}}(1+e^{i\theta}2^{-s})$, then $\|f_\theta\|=1$ and $\langle C_{\psi,\phi}f_\theta,f_\theta\rangle=\displaystyle\frac{5}{8}+2e^{-i\theta}$. Since $\theta$ is arbitrary, we get the conclusion.
\end{proof}

In the next result, we summarize the preceding results to characterize the inclusion of $0$ in the numerical range of a weighted composition operator.

\begin{Result}\label{summary1}
    Suppose that $\phi$ is a non-constant $c_0$-symbol with $c_0\neq0$ and $\psi=\displaystyle\sum_{n=1}^\infty \psi_n n^{-s}\in \mathcal{H}^2$ such that $C_{\psi,\phi}$ is bounded, then
    \begin{enumerate}
        \item If $\psi_1=0$, then $0\in W(C_{\psi,\phi})$.
        \item If $\psi_1\neq0$ then the following cases arise:
        \begin{enumerate}
            \item If $\phi$ is not of the form $\phi(s)=s+c_1$ where $\Re c_1\geq 0$, then $0$ lies in the interior of $W(C_{\psi,\phi})$.
            \item If $\phi(s)=s+c_1$ where $\Re c_1\geq 0$, then $0$ may or may not belong to $W(C_{\psi,\phi})$.
        \end{enumerate}
    \end{enumerate}
\end{Result}

\begin{question}\label{ques}
    Let $\phi$ and $\psi$ be defined as in Result \ref{summary1} with $\psi_1=0$. Does $0$ lie in the interior of $W(C_{\psi,\phi})$?
\end{question}
We give partial answer to this question in the following section.

\section{Inclusion of circular or elliptic disk in the numerical range}\label{Sec3}

The results presented in this section are motivated by the work of Gunatillake et al. \cite{gunatillake2014numerical} on weighted composition operators on Hardy space. These determine when the numerical range of a weighted composition operator contains disks centered at the origin. In such cases, $0$ lies in the interior of the numerical range. Also, they provide lower estimate for the numerical radius of $C_{\psi,\phi}$.

\begin{proposition}\label{circular1}
    Suppose that $\phi$ is of the form $\phi(s)=s+c$ where $\Re c\geq 0$ and $\psi(s)=\displaystyle\sum_{n=1}^\infty \psi_n n^{-s}\in \mathcal{H}^2$ with $\psi_1=0$ such that $C_{\psi,\phi}$ is bounded. Then $W(C_{\psi,\phi})$ contains the closed circular disk whose center is at the origin and radius is $|\psi_{j/2} 2^{-c}|/2$ for each even $j>2$.
\end{proposition}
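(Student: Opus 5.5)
The plan is to reproduce, for this translation symbol, the two–dimensional compression argument already used in the proof of Theorem \ref{interior main}. Fix an even $j>2$ and write $j=2k$ with $k\geq 2$. For $d_1,d_2\in\mathbb{C}$ with $|d_1|^2+|d_2|^2=1$, consider the unit vector
$$f(s)=d_1 2^{-s}+d_2 j^{-s}.$$
The condition $j>2$ is used exactly here: it guarantees that $2^{-s}$ and $j^{-s}$ are distinct orthonormal basis elements, so $\|f\|=1$.

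Since $\phi(s)=s+c$, we have $n^{-\phi(s)}=n^{-c}n^{-s}$. Hence
$$C_{\psi,\phi}f(s)=\sum_{n=1}^\infty \psi_n n^{-s}\left(d_1 2^{-c}2^{-s}+d_2 j^{-c} j^{-s}\right)=\sum_{n=1}^\infty \left(d_1 2^{-c}\psi_n (2n)^{-s}+d_2 j^{-c}\psi_n (jn)^{-s}\right).$$
We then read off the coefficients of $2^{-s}$ and $j^{-s}$ in this expansion.
\begin{enumerate}[(a)]
\item The coefficient of $2^{-s}$ is $d_1 2^{-c}\psi_1$, coming only from $n=1$ in the first sum. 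The second sum contributes nothing, since $jn=2$ is impossible for $j>2$. Because $\psi_1=0$, this coefficient vanishes.
\item The coefficient of $j^{-s}$ is $d_1 2^{-c}\psi_{k}+d_2 j^{-c}\psi_1$, coming from $2n=j$ (i.e. $n=k$) in the first sum and from $n=1$ in the second. Since $\psi_1=0$, it equals $d_1 2^{-c}\psi_{k}$.
\end{enumerate}
Therefore
$$\langle C_{\psi,\phi}f,f\rangle=2^{-c}\psi_{j/2}\,d_1\overline{d_2}.$$

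With $A=\begin{bmatrix}0&0\\ 2^{-c}\psi_{j/2}&0\end{bmatrix}$ acting on $\mathbb{C}^2$, the inner product above is exactly $\langle A(d_1,d_2)^T,(d_1,d_2)^T\rangle$. Hence $W(A)=\{2^{-c}\psi_{j/2}x\overline{y}:|x|^2+|y|^2=1\}\subseteq W(C_{\psi,\phi})$. By the standard fact from \cite{gustafson1997numerical}, already invoked in Theorem \ref{interior main}, the numerical range of a $2\times 2$ nilpotent matrix with off-diagonal entry $\alpha$ is the closed disk centered at $0$ with radius $|\alpha|/2$. This gives the closed disk of radius $|\psi_{j/2}2^{-c}|/2$, as claimed.

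No step is a serious obstacle. The only point needing care is the bookkeeping of which products $2n$ and $jn$ land on the indices $2$ and $j$. This is where both hypotheses enter: $\psi_1=0$ kills the diagonal terms, and $j>2$ keeps the two basis vectors distinct and prevents a $\psi_{2/j}$-type contribution to the coefficient of $2^{-s}$. Boundedness of $C_{\psi,\phi}$ is used only to ensure that the inner products are well defined.
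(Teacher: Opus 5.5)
Your proposal is correct and follows essentially the same route as the paper. The paper compresses $C_{\psi,\phi}$ to $\text{span}\{e_2,e_j\}$ and obtains the nilpotent matrix $\begin{bmatrix}0&0\\ \psi_{j/2}2^{-c}&0\end{bmatrix}$, which is exactly what your quadratic-form computation with $f=d_1 2^{-s}+d_2 j^{-s}$ encodes; your explicit bookkeeping of the $2^{-s}$ and $j^{-s}$ coefficients is, if anything, slightly more careful than the paper's display.
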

\begin{proof}
    Let $j>2$ be an even integer. We define $M_j$ to be the subspace of $\mathcal{H}^2$ spanned by $e_2(s)=2^{-s}$ and $e_j(s)=j^{-s}$ for each $j$. Therefore,
    $$C_{\psi,\phi}(e_2)=(\psi_2 2^{-s}+\psi_3 3^{-s} +\cdots)2^{-s} 2^{-c}$$
    and
    $$C_{\psi,\phi}(e_j)=(\psi_2 2^{-s}+\psi_3 3^{-s} +\cdots)j^{-s} j^{-c}.$$
    Then the matrix representation of the compression of $C_{\psi,\phi}$ to $M_j$ is given by $$T_j=\begin{bmatrix}
        0 & 0 \\
        \psi_{j/2}2^{-c} &0
    \end{bmatrix}.$$
    From \cite{gustafson1997numerical}, we have $W(T_j)$ is the closed disk centered at the origin and radius $|\psi_{j/2} 2^{-c}|/2$. Since $W(T_j)\subseteq W(C_{\psi,\phi})$ for each $j$, hence the result follows.
\end{proof}

\begin{example}
     Suppose $\phi(s)=s+2$ be analytic self-map on $\mathbb{C}_{1/2}$ and $\psi(s)=3^{-s}+(i+3)4^{-s}$ be analytic map on $\mathbb{C}_{1/2}$. Then $\phi$ is a $c_0-$symbol, $\psi\in \mathcal{M}$ and hence $C_{\psi,\phi}$ is bounded. From Proposition \ref{circular1}, $W(C_{\psi,\phi})$ contains a circular disk with center at origin and radius $\frac{\sqrt{10}}{32}$. Also, we get that $w(C_{\psi,\phi})\geq \frac{\sqrt{10}}{32}$.
\end{example}

\begin{proposition}\label{circular2}
    Suppose that $\phi(s)=c_0s+\eta(s)$, where $c_0 (\geq 2)\in \mathbb{N}$ and $\eta(s)=\displaystyle\sum_{n=1}^\infty c_n n^{-s} \in\mathcal{D}$ and let $\psi(s)=\displaystyle\sum_{n=m}^\infty \psi_n n^{-s}\in \mathcal{H}^2$ where $m\geq2$ such that $C_{\psi,\phi}\in \mathbb{B}(\mathcal{H}^2)$. Then $W(C_{\psi,\phi})$ contains a circular disk of radius $|\psi_m|/2$ centered at the origin.
\end{proposition}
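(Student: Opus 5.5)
The plan is to find a two-dimensional subspace on which the compression of $C_{\psi,\phi}$ is a nilpotent $2\times 2$ matrix, as in the proofs of Theorem \ref{interior main} and Proposition \ref{circular1}. The natural choice is $M=\operatorname{span}\{e_1,e_m\}$, where $e_1\equiv 1$ and $e_m(s)=m^{-s}$. The reason is that the constant function picks out the coefficients of $\psi$ itself, while $\psi$ has no constant term because $m\geq 2$.

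\emph{Step 1: the image of $e_1$.} Since $e_1\circ\phi\equiv 1$, we have $C_{\psi,\phi}e_1=\psi=\sum_{n\geq m}\psi_n n^{-s}$. Hence
$$\langle C_{\psi,\phi}e_1,e_1\rangle=0 \quad\text{and}\quad \langle C_{\psi,\phi}e_1,e_m\rangle=\psi_m.$$

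\emph{Step 2: the image of $e_m$.} By the expansion from \cite{gordon1999composition} already used in the proof of Theorem \ref{interior main},
$$m^{-\phi(s)}=m^{-c_0s}m^{-c_1}\Bigl(1+\sum_{k\geq 2}a_k^{(m)}k^{-s}\Bigr),$$
so
$$C_{\psi,\phi}e_m=m^{-c_1}\Bigl(\sum_{n\geq m}\psi_n n^{-s}\Bigr)(m^{c_0})^{-s}\Bigl(1+\sum_{k\geq 2}a_k^{(m)}k^{-s}\Bigr).$$
Every Dirichlet index occurring in this product has the form $n\,m^{c_0}k$ with $n\geq m$ and $k\geq 1$. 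Such an index is at least $m^{c_0+1}>m>1$. Therefore
$$\langle C_{\psi,\phi}e_m,e_m\rangle=0 \quad\text{and}\quad \langle C_{\psi,\phi}e_m,e_1\rangle=0.$$
This uses only $c_0\geq 1$, so the hypothesis $c_0\geq 2$ is more than enough. One should also note that the product of the two series is a legitimate element of $\mathcal{H}^2$ whose coefficients are computed formally. This holds because $C_{\psi,\phi}$ is bounded and the coefficients are determined by the Dirichlet series in a half-plane of absolute convergence.

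\emph{Step 3: the numerical range of the compression.} With respect to the orthonormal basis $\{e_1,e_m\}$, the compression of $C_{\psi,\phi}$ to $M$ is
$$T=\begin{bmatrix}0&0\\ \psi_m&0\end{bmatrix}.$$
By \cite{gustafson1997numerical}, $W(T)$ is the closed disk centered at the origin with radius $|\psi_m|/2$. Since the numerical range of a compression is contained in $W(C_{\psi,\phi})$, this disk lies in $W(C_{\psi,\phi})$.

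The only step needing care is Step 2. There one must verify that no term of the product $\psi\cdot(e_m\circ\phi)$ lands on index $1$ or index $m$. The index bound $n\,m^{c_0}k\geq m^{c_0+1}$ settles this directly.
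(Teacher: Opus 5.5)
Your proof is correct and is essentially the paper's argument. The paper works in the same subspace $\mathrm{span}\{1,m^{-s}\}$: it evaluates the quadratic form at the unit vectors $(\lambda+m^{-s})/\sqrt{2}$ with $|\lambda|=1$ to obtain the circle $\{\lambda\psi_m/2\}$ and then implicitly uses convexity, whereas you write down the full nilpotent compression directly. Your remark that only $c_0\geq 1$ is needed is also right, since every index is at least $m^{c_0+1}>m$, and it answers the question posed after Result \ref{summary2} affirmatively whenever $\psi\not\equiv 0$.
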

\begin{proof}
    Let $f_m(s)=\displaystyle\frac{\lambda+m^{-s}}{\sqrt{2}}$ with $|\lambda|=1$. Then $\|f_m\|=1$ for each $m\geq 2$ and from \cite{gordon1999composition}, we get,
    $$(f_m\circ \phi)(s)=\frac{1}{\sqrt{2}}\left(\lambda+m^{-c_0s}m^{c_1}\left(1+\sum_{l=2}^\infty d_l^{(m)}l^{-s}\right)\right).$$ Thus
    \begin{eqnarray*}
        &&\langle C_{\psi,\phi} f_m,f_m\rangle\\
        &=&\langle\psi\cdot(f_m\circ \phi),f_m\rangle\\
        &=&\frac{1}{2}\left\langle \left(\psi_m m^{-s}+\psi_{m+1} (m+1)^{-s}+\ldots\right)\left(\lambda+m^{-c_0s}m^{c_1}\left(1+\sum_{l=2}^\infty d_l^{(m)}l^{-s}\right)\right),\lambda+m^{-s}\right\rangle\\
        &=&\frac{1}{2}\Big\langle \lambda\psi_m m^{-s}+(\text{terms containing}\ (m+k)^{-s}\  \text{where}\ k\in\mathbb{N})+\\
        &&(\text{terms containing}\ m^{-k}\  \text{for}\ k>s),\lambda+m^{-s}\Big\rangle\\
        &=&\frac{1}{2}\lambda \psi_m.
    \end{eqnarray*}
    Since $|\lambda|=1$ and $\|f_m\|=1$, we get the result.
\end{proof}

\begin{example}
    Suppose $\phi(s)=4s+2^{-s}$ be analytic self-map on $\mathbb{C}_{1/2}$ and $\psi(s)=(i+2)3^{-s}$ be analytic map on $\mathbb{C}_{1/2}$. Then $\phi$ is a $c_0-$symbol, $\psi\in \mathcal{M}$ and hence $C_{\psi,\phi}$ is bounded. Thus from Proposition \ref{circular2}, we conclude that a circular disk of radius $\frac{\sqrt{5}}{2}$ centered at origin is contained in $W(C_{\psi,\phi})$. Also $w(C_{\psi,\phi})\geq \frac{\sqrt{5}}{2}$.
\end{example}

\noindent The following results determine when certain elliptic disks are contained in the numerical range of a weighted composition operator.

\begin{proposition}\label{ellipse}
    Let $\phi$ be of the form $\phi(s)=s+c$ where $\Re c\geq 0$ and $\psi(s)=\displaystyle\sum_{n=1}^\infty \psi_n n^{-s}\in \mathcal{H}^2$ such that $C_{\psi,\phi}\in \mathbb{B}(\mathcal{H}^2)$. Suppose $\psi_j\neq 0$ for some $j\ (\geq 2)\in \mathbb{N}$, then $W(C_{\psi,\phi})$ contains the ellipse with foci at $\psi_1$ and $\psi_1 j^{-c}$, and with minor axis $|\psi_j|$ and major axis $\sqrt{|\psi_1-\psi_1j^{-c}|^2+|\psi_j|^2}$. In particular, if $\psi_1=0$, then $W(C_{\psi,\phi})$ contains the disk center at $0$ and radius $|\psi_j|$.
\end{proposition}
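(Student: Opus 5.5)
The plan is to compress $C_{\psi,\phi}$ to the two-dimensional subspace $M=\operatorname{span}\{e_1,e_j\}$, where $e_1\equiv 1$ and $e_j(s)=j^{-s}$. We then read off the resulting $2\times 2$ matrix and apply the elliptical range theorem for $2\times 2$ matrices (see \cite{gustafson1997numerical}). This is the same strategy as in Propositions \ref{circular1} and \ref{circular2}. Since $W$ of a compression is contained in $W(C_{\psi,\phi})$, whatever elliptic disk $W$ of the compression is will lie in $W(C_{\psi,\phi})$.

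\textbf{Step 1: images of the basis vectors.} Since $e_1\circ\phi\equiv 1$, we have $C_{\psi,\phi}e_1=\psi=\sum_{n}\psi_n n^{-s}$. Hence
$$\langle C_{\psi,\phi}e_1,e_1\rangle=\psi_1,\qquad \langle C_{\psi,\phi}e_1,e_j\rangle=\psi_j.$$
Next, $e_j\circ\phi(s)=j^{-s-c}=j^{-c}j^{-s}$, so
$$C_{\psi,\phi}e_j=j^{-c}\sum_n\psi_n (nj)^{-s}.$$
This series involves only the indices $nj\ge j$. Therefore
$$\langle C_{\psi,\phi}e_j,e_1\rangle=0,\qquad \langle C_{\psi,\phi}e_j,e_j\rangle=\psi_1 j^{-c}.$$

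\textbf{Step 2: the compression.} By Step 1, the compression of $C_{\psi,\phi}$ to $M$ has the matrix
$$T=\begin{bmatrix}\psi_1 & 0\\ \psi_j & \psi_1 j^{-c}\end{bmatrix}.$$
This matrix is lower triangular, so its eigenvalues are $\psi_1$ and $\psi_1 j^{-c}$.

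\textbf{Step 3: the elliptical range theorem.} For a triangular $2\times2$ matrix with diagonal entries $\lambda_1,\lambda_2$ and off-diagonal entry $b$, the numerical range is the closed elliptic disk with foci $\lambda_1,\lambda_2$, minor axis $|b|$, and major axis $\sqrt{|\lambda_1-\lambda_2|^2+|b|^2}$. Applying this to $T$ gives exactly the ellipse in the statement. Then $W(T)\subseteq W(C_{\psi,\phi})$ finishes the proof.

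\textbf{Special case $\psi_1=0$.} Here both foci collapse to $0$, so $W(T)$ is a disk centered at $0$. The one point I expect to need care is the normalization in this final claim. The elliptical range theorem gives the full axis lengths as $|\psi_j|$, so $W(T)$ is the closed disk centered at $0$ of diameter $|\psi_j|$, that is, of radius $|\psi_j|/2$. This matches the computation in Proposition \ref{circular1}. So I would state the conclusion as the disk of radius $|\psi_j|/2$, reading ``radius $|\psi_j|$'' in the statement as referring to the axis length. The rest is routine bookkeeping of Dirichlet coefficients, which is straightforward because $\phi$ is a pure translation.
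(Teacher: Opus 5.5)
Your proposal is correct and follows the paper's proof: compress $C_{\psi,\phi}$ to $\mathrm{span}\{e_1,e_j\}$, get the lower-triangular matrix with diagonal $\psi_1,\ \psi_1 j^{-c}$ and off-diagonal entry $\psi_j$, and apply the elliptical range theorem. You are also right to correct the special case. The argument only gives the disk of radius $|\psi_j|/2$, consistent with Proposition \ref{circular1}, and the stated radius $|\psi_j|$ can fail. For example, take $\psi(s)=2^{-s}$ and $\phi(s)=s+1$. Then $|\langle C_{\psi,\phi}f,f\rangle|=\bigl|\sum_n n^{-1}f_n\overline{f_{2n}}\bigr|\le \tfrac34\|f\|^2$, so $w(C_{\psi,\phi})\le 3/4<1=|\psi_2|$.
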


\begin{proof}
    Let $Q_j=$span$\{e_1,e_j\}$ where $e_1(s)=1$ and $e_j(s)=j^{-s}$ for $j\geq 2$. Now,
    $$C_{\psi,\phi}(e_1)=\psi_1+\psi_2 2^{-s}+\psi_3 3^{-s} +\cdots$$
    and
    $$C_{\psi,\phi}(e_j)=(\psi_1+\psi_2 2^{-s}+\psi_3 3^{-s} +\cdots)j^{-s} j^{-c}.$$
    Let $A_j$ be the matrix representation of the compression of $C_{\psi,\phi}$ to $Q_j$. Then
    $$A_j=\begin{bmatrix}
        \psi_1 &0\\
        \psi_j &\psi_1 j^{-c}
    \end{bmatrix}.$$
    From \cite[Theorem 1.5]{gau2021numerical}, we get that $W(A_j)$ is the ellipse with foci at $\psi_1$ and $\psi_1 j^{-c}$, and with minor axis $|\psi_j|$ and major axis $\sqrt{|\psi_1-\psi_1j^{-c}|^2+|\psi_j|^2}$. Since $W(A_j)\subseteq W(C_{\psi,\phi})$, hence the result follows.
\end{proof}

\begin{example}
    Suppose $\phi(s)=s+2$ be analytic self-map on $\mathbb{C}_{1/2}$ and $\psi(s)=1+(i+2)2^{-s}+(2i+1)3^{-s}$ be analytic map on $\mathbb{C}_{1/2}$. Then $\phi$ is a $c_0-$symbol, $\psi\in \mathcal{M}$ and hence $C_{\psi,\phi}$ is bounded. From Proposition \ref{ellipse}, $W(C_{\psi,\phi})$ contains the convex hull of the elliptic disks with foci at $(1,0),(\frac{1}{4},0)$, minor axis $\sqrt{5}$, major axis $\displaystyle \frac{\sqrt{89}}{4}$ and foci at $(1,0),(\frac{1}{9},0)$, minor axis $\sqrt{5}$, major axis $\displaystyle \frac{\sqrt{469}}{9}$.
\end{example}

\begin{proposition}
    Let $\displaystyle\phi(s)=s+\frac{2\pi i}{\log n}$ where $n\in \mathbb{N}\setminus \{1\}$ and $\psi(s)=\displaystyle\sum_{m=1}^\infty \psi_m m^{-s}\in \mathcal{H}^2$ such that $C_{\psi,\phi}\in \mathbb{B}(\mathcal{H}^2)$. Let $a\geq 2$ be a positive integer such that $\displaystyle \psi_n\psi_{n^a}\psi_{n^{a-1}}=0$ but at least one of the three terms is non-zero. Then circular disk of radius $\frac{1}{2}\sqrt{|\psi_n|^2+|\psi_{n^a}|^2+|\psi_{n^{a-1}}|^2}$ centered at $\psi_1$ is contained in  $W(C_{\psi,\phi})$.
\end{proposition}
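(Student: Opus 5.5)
The plan is to compress $C_{\psi,\phi}$ to a three-dimensional coordinate subspace on which $\phi$ acts trivially. Then $W(C_{\psi,\phi})$ contains $\psi_1$ plus the numerical range of a $3\times 3$ strictly lower triangular matrix, and I will check that this numerical range is a circular disk.

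\textbf{Step 1: the compression.} Put $c=\frac{2\pi i}{\log n}$. For every $b\geq 0$,
$$C_\phi e_{n^b}(s)=(n^b)^{-s-c}=e^{-2\pi i b}\,(n^b)^{-s}=e_{n^b}(s),$$
so $C_{\psi,\phi}e_{n^b}=\psi\cdot e_{n^b}$. Hence, for $d\geq b$,
$$\langle C_{\psi,\phi}e_{n^b},e_{n^d}\rangle=\psi_{n^{d-b}},$$
and this inner product is $0$ when $d<b$. Let $R=\operatorname{span}\{e_1,e_{n^{a-1}},e_{n^a}\}$, which is three-dimensional because $a\geq 2$. In this ordered basis the compression of $C_{\psi,\phi}$ to $R$ is
$$A=\begin{bmatrix}\psi_1&0&0\\ \psi_{n^{a-1}}&\psi_1&0\\ \psi_{n^a}&\psi_n&\psi_1\end{bmatrix}=\psi_1 I+N.$$
So $W(C_{\psi,\phi})\supseteq W(A)=\psi_1+W(N)$, exactly as in Propositions \ref{circular1} and \ref{ellipse}.

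\textbf{Step 2: $W(N)$ is a disk centered at $0$.} Write $x=\psi_{n^{a-1}}$, $y=\psi_{n^a}$, $z=\psi_n$. The hypothesis says $xyz=0$ and $(x,y,z)\neq 0$. I will show $W(N)$ is invariant under rotations. For a diagonal unitary $D=\operatorname{diag}(e^{i\theta_1},e^{i\theta_2},e^{i\theta_3})$, the entries of $DND^*$ are $x e^{i(\theta_2-\theta_1)}$, $y e^{i(\theta_3-\theta_1)}$ and $z e^{i(\theta_3-\theta_2)}$. The goal is $DND^*=e^{it}N$ for every $t$.
\begin{itemize}
\item If $y=0$, take $\theta=(0,t,2t)$.
\item If $x=0$, take $\theta=(0,0,t)$.
\item If $z=0$, take $\theta=(0,t,t)$.
\end{itemize}
In each case $W(N)=e^{it}W(N)$ for all $t$. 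Since $W(N)$ is compact (finite dimension) and convex, it is a closed disk centered at $0$ with radius $w(N)$.

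\textbf{Step 3: the radius.} This is the main computation, and I would handle the three cases separately. The target in every case is $w(N)=\frac12\sqrt{|x|^2+|y|^2+|z|^2}$.
\begin{itemize}
\item If $z=0$, then $N=(x,y)^T e_1^*$ has rank one with $N^2=0$. Its numerical range is the disk of radius $\frac12\|N\|=\frac12\sqrt{|x|^2+|y|^2}$, by \cite[Proposition 2.5]{bourdon2002zero}, the case of Proposition \ref{constant}(ii).
\item If $x=0$, then $N=e_3(y,z,0)$, which is again rank one and square-zero. The same argument gives radius $\frac12\sqrt{|y|^2+|z|^2}$.
\item If $y=0$, $N$ is a $3\times3$ weighted shift with weights $x,z$. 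Its numerical radius is the largest eigenvalue of $\Re(e^{-it}N)$, that is, half the largest eigenvalue of the tridiagonal Hermitian matrix with off-diagonal moduli $|x|,|z|$. That eigenvalue is $\sqrt{|x|^2+|z|^2}$, so the radius is $\frac12\sqrt{|x|^2+|z|^2}$.
\end{itemize}
Alternatively, one can quote the classification of $3\times3$ numerical ranges in \cite{gau2021numerical}, which gives a circular disk of exactly this radius when $xyz=0$.

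Combining the three steps, $W(C_{\psi,\phi})$ contains the closed disk centered at $\psi_1$ with radius $\frac12\sqrt{|\psi_n|^2+|\psi_{n^a}|^2+|\psi_{n^{a-1}}|^2}$. The only delicate point is the eigenvalue computation in the weighted-shift case. Everything else is bookkeeping of which Dirichlet coefficients land on which basis vectors.
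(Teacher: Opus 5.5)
Your proof is correct and follows the paper's route. You compress $C_{\psi,\phi}$ to a three-dimensional span of vectors $e_{n^b}$ fixed by $C_\phi$, obtaining $\psi_1 I+N$ with $N$ strictly lower triangular; the paper uses $\operatorname{span}\{e_1,e_n,e_{n^a}\}$, which only swaps where $\psi_n$ and $\psi_{n^{a-1}}$ sit on the subdiagonal, so both choices work. The one difference is that the paper cites \cite[Theorem 4.1]{keeler1997numerical} for the fact that $W(\psi_1 I+N)$ is the disk of radius $\frac12\sqrt{|\psi_n|^2+|\psi_{n^a}|^2+|\psi_{n^{a-1}}|^2}$ when the product of the three entries vanishes, whereas you verify this directly via diagonal unitary rotations and the case-by-case radius computation.
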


\begin{proof}
    Let $M_{n,a}$ be the subspace of $\mathcal{H}^2$ spanned by $$e_1(s)=1,\ e_n(s)=n^{-s}\ \text{and}\ e_{n^a}(s)=(n^a)^{-s}.$$
    Let $T_{n,a}$ be the compression of $C_{\psi,\phi}$ to $M_{n,a}$. Now,
    $$C_{\psi,\phi}(e_1)=\psi_1+\psi_2 2^{-s}+\psi_3 3^{-s} +\cdots,$$
    \begin{align*}
        C_{\psi,\phi}(e_n)&=(\psi_1+\psi_2 2^{-s}+\psi_3 3^{-s} +\cdots)n^{-s} n^{-\frac{2\pi i}{\log n}}\\
        &=(\psi_1+\psi_2 2^{-s}+\psi_3 3^{-s} +\cdots)n^{-s}
    \end{align*}
     and 
     \begin{align*}
         C_{\psi,\phi}(e_{n^a})&=(\psi_1+\psi_2 2^{-s}+\psi_3 3^{-s} +\cdots)(n^a)^{-s}n^{-\frac{2\pi ia}{\log n}}\\
         &=(\psi_1+\psi_2 2^{-s}+\psi_3 3^{-s} +\cdots)(n^a)^{-s}.
     \end{align*}
     Thus the matrix representation of $T_{n,a}$ is given by
     $$\begin{bmatrix}
         \psi_1 & 0 & 0\\
         \psi_n &\psi _1 & 0\\
         \psi_{n^a}& \psi_{n^{a-1}} & \psi_1
     \end{bmatrix}.$$
     From \cite[Theorem 4.1]{keeler1997numerical}, $W(T_{n,a})$ is the circular disk centered at $\psi_1$ and radius $$\frac{1}{2}\sqrt{|\psi_n|^2+|\psi_{n^a}|^2+|\psi_{n^{a-1}}|^2}.$$
     Since $W(T_{n,a})\subseteq W(C_{\psi,\phi})$, we obtain the result.
\end{proof}

The results obtained in this section provide the following partial answers to Question \ref{ques} raised in Section \ref{Sec2}.

\begin{Result}\label{summary2}
    Suppose that $\phi$ is a non-constant $c_0$-symbol with $c_0\neq0$ and $\psi=\displaystyle\sum_{n=1}^\infty \psi_n n^{-s}\in \mathcal{H}^2$ with $\psi_1=0$ such that $C_{\psi,\phi}$ is bounded. Then $0$ belongs to the interior of $W(C_{\psi,\phi})$ for the following cases:
    \begin{enumerate}
        \item $\phi(s)=s+c_1$ where $\Re c_1\geq 0$.
        \item $\phi(s)=c_0s+\eta(s)$, where $c_0 (\geq 2)\in \mathbb{N}$ and $\eta(s)\in\mathcal{D}$.
    \end{enumerate}
\end{Result}

Cases (1) and (2) follow from Proposition \ref{ellipse} and \ref{circular2}, respectively. We end this section with the following question.
\begin{question}
    Let $\phi$ and $\psi$ be defined as in Result \ref{summary2}. If $\phi(s)=s+c_1+c_r r^{-s}+\cdots $, where $c_r\neq 0$, does $0$ lie in the interior of $W(C_{\psi,\phi})$?
\end{question}

\section{Invariant subspaces of weighted composition operators}\label{Sec4}
A closed subspace $M$ of a complex separable Hilbert space $H$ is called an invariant subspace of $T\in\mathbb{B}(H)$, if $Tx\in M$ for every $x\in M$, i.e., $TM\subseteq M$. Also, $M$ is said to be reducing subspace of $T$ if $M$ is invariant under both $T$ and $T^*$, or equivalently, both $M$ and $M^\perp$ are invariant under $T$. For $T\in\mathbb{B}(H)$, the lattice of all invariant subspaces of $T$ is denoted by Lat $T$. An operator $T\in\mathbb{B}(H)$ is said to be reductive if every invariant subspace of $T$ is reducing. In this section, we give some results concerning lattices of all invariant subspaces of weighted composition operators defined on Hardy space of Dirichlet series. Wang and Yao \cite{wang2015invariant} derived these results for composition operators. In order to view them for weighted composition operators, we need the following lemmas.

\begin{lemma}\label{lim kw}
    For $\omega\in \mathbb{C}_\delta=\{s\in \mathbb{C}: \Re s>\delta\}$ for sufficiently large $\delta$, we have $\displaystyle\lim_{\Re\omega\to+\infty} K_\omega=1.$
\end{lemma}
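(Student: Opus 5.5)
We read the statement as convergence in the norm of $\mathcal{H}^2$: $\|K_\omega-1\|\to 0$ as $\Re\omega\to+\infty$, where $1=e_1$ is the constant function.

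The plan is to compute $\|K_\omega - 1\|$ directly in the orthonormal basis $\{e_n\}$. Since $K_\omega(s)=\zeta(\overline{\omega}+s)=\sum_{n=1}^\infty n^{-\overline{\omega}}n^{-s}$, the coefficients of $K_\omega$ with respect to $e_n$ are $n^{-\overline{\omega}}$. The first coefficient is $1$, and subtracting the constant function $1=e_1$ gives
\begin{equation*}
\|K_\omega-1\|^2=\sum_{n=2}^\infty |n^{-\overline{\omega}}|^2=\sum_{n=2}^\infty n^{-2\Re\omega}=\zeta(2\Re\omega)-1 .
\end{equation*}
This is finite as soon as $\Re\omega>1/2$. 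The hypothesis $\omega\in\mathbb{C}_\delta$ with $\delta$ large only ensures that we work well inside the domain.

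It then remains to show $\zeta(\sigma)-1\to 0$ as $\sigma\to+\infty$. For $\sigma\geq 2$ we have $n^{-\sigma}\leq 2^{2-\sigma} n^{-2}$ for every $n\geq 2$, so
\begin{equation*}
\zeta(\sigma)-1\leq 2^{2-\sigma}(\zeta(2)-1),
\end{equation*}
which tends to $0$. One could instead invoke dominated convergence, or use the integral comparison $\sum_{n\ge2}n^{-\sigma}\le 2^{-\sigma}+\int_2^\infty x^{-\sigma}\,dx$. In either case $\|K_\omega-1\|\to 0$, which is the claim.

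No step here is a real obstacle. The only point needing care is the interpretation: the limit should be taken in norm, and the estimate is uniform in $\Im\omega$, so only $\Re\omega\to+\infty$ matters. Norm convergence also gives pointwise convergence $K_\omega(s)\to 1$ for each $s\in\mathbb{C}_{1/2}$, via the reproducing property $|f(s)|\le\|f\|\,\zeta(2\Re s)^{1/2}$. This is presumably the form used later, for instance to show that $1$ lies in the closed span of the kernels, or to relate $C_{\psi,\phi}^*K_\omega$ to $\overline{\psi(\omega)}K_{\phi(\omega)}$ in the limit.
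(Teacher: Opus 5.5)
Your proposal is correct and follows essentially the same route as the paper: both compute $\|K_\omega-1\|^2=\sum_{n\ge 2} n^{-2\Re\omega}$ in the basis $\{e_n\}$ and show this tends to $0$ as $\Re\omega\to+\infty$. The paper justifies the last step only by appeal to ``uniform convergence of the series,'' whereas your explicit bound $\zeta(\sigma)-1\le 2^{2-\sigma}(\zeta(2)-1)$ makes that step precise.
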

\begin{proof}
    From the definition, it follows that $K_\omega(s)=\displaystyle\sum_{n=1}^\infty n^{-(\overline{\omega}+s)}$ for $s\in\mathbb{C}_{1/2}$. So, \begin{eqnarray*}
    K_\omega(s)-1=\sum_{n=2}^\infty n^{-(\overline{\omega}+s)}\ \text{and}\ \displaystyle\|K_\omega -1\|^2=\sum_{n=2}^\infty |n^{-\overline{ \omega}}|^2=\sum_{n=2}^\infty n^{-2\Re \omega}.
    \end{eqnarray*}
    Now, for $n\geq 2$, we have $n^{-2\Re{\omega}}\to 0$ as $\Re\omega\to +\infty$. Therefore, $\displaystyle\sum_{n=2}^\infty n^{-2\Re{\omega}}\to 0$ as $\Re\omega\to +\infty$ follows from uniform convergence of the series. Thus, $\displaystyle\lim _{\Re\omega\to+\infty}\|K_\omega-1\|= 0$ which implies $\displaystyle\lim_{\Re\omega\to+\infty} K_\omega=1.$
\end{proof}

\begin{lemma}\label{adj1}
    Let $\phi(s)=c_0s+\eta(s)$, where $c_0\in \mathbb{N}$ and $\eta(s)=\displaystyle\sum_{k=1}^\infty c_k k^{-s} \in\mathcal{D}$ and let $\psi(s)=\displaystyle\sum_{n=1}^\infty \psi_n n^{-s}\in \mathcal{H}^2$ such that $C_{\psi,\phi}\in \mathbb{B}(\mathcal{H}^2)$. Then the following results hold:
    \begin{enumerate}[(i)]
        \item If $c_0=0$, then $C_{\psi,\phi}^*1=\overline{\psi_1}\displaystyle\sum_{m=1}^\infty m^{-\overline{c_1}}m^{-s}$,
        \item If $c_0\geq1$, then $C_{\psi,\phi}^*1=\overline{\psi_1}$.
    \end{enumerate}
\end{lemma}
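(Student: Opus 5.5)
The plan is to compute the Fourier coefficients of $C_{\psi,\phi}^*1$ against the orthonormal basis $e_m(s)=m^{-s}$. This avoids any formal manipulation of infinite products of Dirichlet series. Since $C_{\psi,\phi}$ is bounded, $C_{\psi,\phi}^*1\in\mathcal{H}^2$, and
$$C_{\psi,\phi}^*1=\sum_{m=1}^\infty \langle C_{\psi,\phi}^*1,e_m\rangle e_m=\sum_{m=1}^\infty \overline{\langle \psi\cdot(e_m\circ\phi),1\rangle}\, m^{-s}.$$
So it suffices to determine, for each $m\geq1$, the constant coefficient (the coefficient of $1^{-s}$) of the function $g_m=\psi\cdot(e_m\circ\phi)=C_{\psi,\phi}e_m\in\mathcal{H}^2$.

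The key observation is that for any $g=\sum_n b_n n^{-s}\in\mathcal{H}^2$ the constant coefficient is recovered as $b_1=\lim_{\sigma\to+\infty} g(\sigma)$. Indeed, $g(\sigma)=\langle g,K_\sigma\rangle$ and $K_\sigma\to1$ in norm by Lemma \ref{lim kw}, so $g(\sigma)\to\langle g,1\rangle=b_1$. Hence
$$\langle g_m,1\rangle=\lim_{\sigma\to+\infty}\psi(\sigma)\, m^{-\phi(\sigma)}.$$
The same argument applied to $\psi$ gives $\psi(\sigma)\to\psi_1$.

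It remains to evaluate $m^{-\phi(\sigma)}=m^{-c_0\sigma}\,m^{-\eta(\sigma)}$ for real $\sigma\to+\infty$. Since $\eta\in\mathcal{D}$ converges in some half-plane, and a convergent Dirichlet series tends to its leading coefficient as $\Re s\to+\infty$ (uniformly, by standard Dirichlet series theory), we have $\eta(\sigma)\to c_1$. Therefore $m^{-\eta(\sigma)}\to m^{-c_1}$, which stays bounded.

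Case $c_0=0$. Here $m^{-\phi(\sigma)}\to m^{-c_1}$ for every $m$, so $\langle g_m,1\rangle=\psi_1 m^{-c_1}$. This yields $C_{\psi,\phi}^*1=\overline{\psi_1}\sum_m m^{-\overline{c_1}}m^{-s}$, which is (i). The series lies in $\mathcal{H}^2$ automatically because it equals $C_{\psi,\phi}^*1$; equivalently, it is $\overline{\psi_1}K_{c_1}$ with $\Re c_1>1/2$.

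Case $c_0\geq1$. For $m=1$ the factor $m^{-\phi(\sigma)}$ is identically $1$, giving the coefficient $\psi_1$. For $m\geq2$ we have $|m^{-c_0\sigma}|=m^{-c_0\sigma}\to0$ while $m^{-\eta(\sigma)}$ stays bounded, so that coefficient is $0$. Hence $C_{\psi,\phi}^*1=\overline{\psi_1}$, which is (ii).

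The main point needing care is the limit $\eta(\sigma)\to c_1$. It relies on the standard fact that a Dirichlet series converging somewhere tends to its first coefficient as $\Re s\to+\infty$; I would cite this rather than reprove it.
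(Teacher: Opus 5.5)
Your argument is correct and is essentially the paper's: both use $K_\omega\to1$ in norm (Lemma \ref{lim kw}) to write $\langle C_{\psi,\phi}^*1,f\rangle=\lim_{\Re\omega\to+\infty}\overline{\psi(\omega)f(\phi(\omega))}$. The paper does this for arbitrary $f$ in (i) and via $K_{\phi(\omega)}\to1$ in (ii), whereas you test only against the basis vectors $e_m$, which is slightly cleaner: it needs only continuity of the entire function $z\mapsto m^{-z}$ at $c_1$ rather than of a general $f\in\mathcal{H}^2$ there (which tacitly requires $\Re c_1>1/2$), and square-summability of the coefficients $\overline{\psi_1}m^{-\overline{c_1}}$ then comes for free from $C_{\psi,\phi}^*1\in\mathcal{H}^2$.
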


\begin{proof}
    \begin{enumerate}[(i)]
        \item As $c_0=0$, by \cite{gordon1999composition}, we have
        \begin{align*}
            C_{\psi,\phi}f(s)&=\psi(s)(f(\phi(s))\\
            &=\displaystyle\sum_{n=1}^\infty \psi_n n^{-s}\left(\sum_{m=1}^\infty a_m m^{-c_1} m^{-\sum_{k=2}^\infty c_k k^{-s}}\right)\ \text{where}\ f(s)=\displaystyle\sum_{m=1}^\infty a_m m^{-s}\\
            &= \displaystyle\sum_{n=1}^\infty \psi_n n^{-s}\sum_{m=1}^\infty a_m m^{-c_1}\prod_{k=2}^\infty\left(1+\sum_{l=1}^\infty \frac{(-c_k\log m)^l}{l!}k^{-ls}\right)
        \end{align*}
        which holds in half-plane of absolute convergence of the series $\sum_{k=1}^\infty c_k k^{-s}$.
        Hence 
        \begin{align*}
            \langle C_{\psi,\phi}^*1,f\rangle =\langle1,C_{\psi,\phi}f\rangle
            &=\lim_{\Re\omega\to+\infty}\langle K_\omega,C_{\psi,\phi}f\rangle \ (\text{from Lemma}\ \ref{lim kw})\\
            &=\lim_{\Re\omega\to+\infty}\langle C_{\psi,\phi}^*K_\omega,f\rangle\\
            &=\lim_{\Re\omega\to+\infty}\langle \overline{\psi(\omega)}K_{\phi(\omega)},f\rangle\\
            &=\lim_{\Re\omega\to+\infty}\overline{\psi(\omega)}\overline{\langle f,K_{\phi(\omega)}\rangle}\\
            &=\lim_{\Re\omega\to+\infty}\overline{\psi(\omega)f(\phi(\omega))}\\
            &=\overline{\psi_1}\sum_{m=1}^\infty\overline{a_m}m^{-\overline{c_1}}\\
            &=\langle\overline{\psi_1}\displaystyle\sum_{m=1}^\infty m^{-\overline{c_1}}m^{-s},f\rangle.
        \end{align*}
        Since $f\in \mathcal{H}^2$ is arbitrary, we have $C_{\psi,\phi}^*1=\overline{\psi_1}\displaystyle\sum_{m=1}^\infty m^{-\overline{c_1}}m^{-s}$.
        \item For $c_0\geq 1$, we know that $+\infty$ is a fixed point of $\phi$ and hence
        $$C_{\psi,\phi}^*1=\lim_{\Re\omega\to+\infty}C_{\psi,\phi}^*K_\omega=\lim_{\Re\omega\to+\infty}\overline{\psi(\omega)}K_{\phi(\omega)}=\overline{\psi_1}.$$
    \end{enumerate}
\end{proof}

\noindent The following theorem gives some sufficient conditions for the case when the lattice of invariant subspaces of one weighted composition operator in contained in that of another operator.

\begin{theorem}
    Let $\displaystyle\phi_1(s)=c_0^{(1)}s+\sum_{k=1}^\infty c_k^{(1)} k^{-s}$ and $\displaystyle\phi_2(s)=c_0^{(2)}s+\sum_{k=1}^\infty c_k^{(2)} k^{-s}$ be $c_0$-symbols and let $\psi_1(s)=\displaystyle\sum_{n=1}^\infty \psi_n^{(1)} n^{-s}$ and $\psi_2(s)=\displaystyle\sum_{n=1}^\infty \psi_n^{(2)} n^{-s}$ be in $\mathcal{H}^2$ such that $C_{\psi_1,\phi_1},C_{\psi_2,\phi_2}\in \mathbb{B}(\mathcal{H}^2)$. If Lat $C_{\psi_1,\phi_1}\subseteq$ Lat $C_{\psi_2,\phi_2}$, then we have the following.
    \begin{enumerate}[(i)]
        \item If $c_0^{(1)}\geq 1$ and $\psi_1^{(2)}\neq 0$, then $c_0^{(2)}\geq 1$.
        \item If $c_0^{(1)}=0$ and $\omega\in \mathbb{C}_{1/2}$ is a fixed point of $\phi_1$, then $\omega$ is also a fixed point of $\phi_2$, provided $\psi_2(\omega)\neq 0$.
    \end{enumerate}
\end{theorem}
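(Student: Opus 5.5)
The plan is to pass to adjoints. Lat $C_{\psi_1,\phi_1}\subseteq$ Lat $C_{\psi_2,\phi_2}$ is equivalent to Lat $C_{\psi_1,\phi_1}^*\subseteq$ Lat $C_{\psi_2,\phi_2}^*$, since $M\mapsto M^\perp$ interchanges the two lattices. In each part we will exhibit a one-dimensional subspace that is invariant under $C_{\psi_1,\phi_1}^*$. It then lies in Lat $C_{\psi_2,\phi_2}^*$, so its spanning vector is an eigenvector of $C_{\psi_2,\phi_2}^*$. We then compare this eigen-relation with the explicit formulas for the adjoint: Lemma \ref{adj1} and $C_{\psi,\phi}^*K_\omega=\overline{\psi(\omega)}K_{\phi(\omega)}$.

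For (i), suppose $c_0^{(1)}\geq 1$. By Lemma \ref{adj1}(ii), $C_{\psi_1,\phi_1}^*1=\overline{\psi_1^{(1)}}\cdot 1$, so span$\{1\}$ is invariant under $C_{\psi_1,\phi_1}^*$. Hence $C_{\psi_2,\phi_2}^*1=\lambda\cdot 1$ for some $\lambda\in\mathbb{C}$.

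Argue by contradiction and assume $c_0^{(2)}=0$. Lemma \ref{adj1}(i) then gives
$$C_{\psi_2,\phi_2}^*1=\overline{\psi_1^{(2)}}\sum_{m=1}^\infty m^{-\overline{c_1^{(2)}}}m^{-s}.$$
Since $\psi_1^{(2)}\neq 0$, the coefficient of $2^{-s}$ equals $\overline{\psi_1^{(2)}}\,2^{-\overline{c_1^{(2)}}}$, which is nonzero. So this vector is not a multiple of $1$, a contradiction, and therefore $c_0^{(2)}\geq 1$.

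This part has one subtle point: for $c_0^{(2)}=0$ the sum $\sum_m m^{-\overline{c_1^{(2)}}}m^{-s}$ must actually lie in $\mathcal{H}^2$. That holds because $\Re c_1^{(2)}>1/2$ (from $\phi_2(+\infty)=c_1^{(2)}\in\overline{\mathbb{C}_{1/2}}$), so it is $K_{c_1^{(2)}}$. In any case Lemma \ref{adj1} identifies it as an element of the space, so the coefficient comparison is legitimate.

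For (ii), let $\omega\in\mathbb{C}_{1/2}$ with $\phi_1(\omega)=\omega$. Then $C_{\psi_1,\phi_1}^*K_\omega=\overline{\psi_1(\omega)}K_\omega$, so span$\{K_\omega\}$ is invariant under $C_{\psi_1,\phi_1}^*$. Hence $\overline{\psi_2(\omega)}K_{\phi_2(\omega)}=C_{\psi_2,\phi_2}^*K_\omega=\mu K_\omega$ for some $\mu$.

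Because $\psi_2(\omega)\neq0$, the left side is nonzero, so $\mu\neq0$ and $K_{\phi_2(\omega)}$ is a nonzero multiple of $K_\omega$. Kernels at distinct points are linearly independent: the coefficients of $K_a$ are $n^{-\overline a}$, and comparing the ratios of the coefficients of $2^{-s}$ and $1$ gives $2^{-\overline a}=2^{-\overline b}$. This alone forces equality only up to $2\pi i/\log 2$, so we also compare the $3^{-s}$ coefficients. Since $\log 2/\log 3$ is irrational, $a=b$.

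Therefore $\phi_2(\omega)=\omega$. The only delicate step is this injectivity of $\omega\mapsto \mathrm{span}\{K_\omega\}$, which the comparison of two coefficients handles.
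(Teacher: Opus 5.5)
Your proposal is correct and follows the paper's proof essentially step for step: you pass to the adjoint lattices, use the $C_{\psi_1,\phi_1}^*$-invariant line $\mathrm{span}\{1\}$ with Lemma \ref{adj1} for (i), and use $\mathrm{span}\{K_\omega\}$ with $C_{\psi,\phi}^*K_\omega=\overline{\psi(\omega)}K_{\phi(\omega)}$ for (ii), where your comparison of the $2^{-s}$ and $3^{-s}$ coefficients usefully justifies the linear independence of $K_\omega$ and $K_{\phi_2(\omega)}$ that the paper only asserts. One small slip: $c_1^{(2)}\in\overline{\mathbb{C}_{1/2}}$ only gives $\Re c_1^{(2)}\geq 1/2$, not $>1/2$, but this is harmless because your argument only needs the single coefficient $\langle C_{\psi_2,\phi_2}^*1,2^{-s}\rangle=\overline{\psi_1^{(2)}}\,2^{-\overline{c_1^{(2)}}}\neq 0$.
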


\begin{proof}
\begin{enumerate}[(i)]
    \item  If $c_0^{(1)}\geq 1$, then from Lemma \ref{adj1}, we have $C_{\psi_1,\phi_1}^*1=\overline{\psi_1^{(1)}}$ and hence, span$\{1\}\in$ Lat $C_{\psi_1,\phi_1}^*$. Therefore, span$\{1\}^\perp\in$ Lat $C_{\psi_1,\phi_1}$. Since Lat $C_{\psi_1,\phi_1}\subseteq$ Lat $C_{\psi_2,\phi_2}$, we have span$\{1\}^\perp\in$ Lat $C_{\psi_2,\phi_2}$ and thus, span$\{1\}\in$ Lat $C_{\psi_2,\phi_2}^*$. If possible, let $c_0^{(2)}=0$. Then from Lemma \ref{adj1}, $C_{\psi_2,\phi_2}^*1=\overline{\psi_1^{(2)}}\displaystyle\sum_{n=1}^\infty n^{-\overline{c_1^{(2)}}}n^{-s}\notin$ span$\{1\}$ which leads to a contradiction. Thus $c_0^{(2)}\geq 1$.
    \item Suppose $c_0^{(1)}=0$ and $\phi_1(\omega)=\omega$ where $\omega\in \mathbb{C}_{1/2}$. Therefore,
    $$C_{\psi_1,\phi_1}^*K_\omega=\overline{\psi_1(\omega)}K_{\phi_1(\omega)}=\overline{\psi_1(\omega)}K_{\omega}\in \ \text{span}\{K_\omega\}$$
    and hence span$\{K_\omega\}\in$ Lat $C_{\psi_1,\phi_1}^*\subseteq$ Lat $C_{\psi_2,\phi_2}^*$. If $\phi_2(\omega)\neq \omega$, then $K_\omega$ and $K_{\phi_2(\omega)}$ are linearly independent. But $$\overline{\psi_2(\omega)}K_{\phi_2(\omega)}=C_{\psi_2,\phi_2}^*K_\omega\in \text{span}\{K_\omega\}.$$
    Thus $\phi_2(\omega)=\omega.$
\end{enumerate}
   
\end{proof}

\noindent The next theorem considers the images of two weighted composition operators.

\begin{theorem}
     Let $\phi_1(s)=c_0^{(1)}s+\sum_{k=1}^\infty c_k^{(1)} k^{-s}$ and $\phi_2(s)=c_0^{(2)}s+\sum_{k=1}^\infty c_k^{(2)} k^{-s}$ be $c_0$-symbols and let $\psi_1(s)=\displaystyle\sum_{m=1}^\infty \psi_m^{(1)} m^{-s}$ and $\psi_2(s)=\displaystyle\sum_{m=1}^\infty \psi_m^{(2)} m^{-s}$ be in $\mathcal{H}^2$ such that $C_{\psi_1,\phi_1},C_{\psi_2,\phi_2}\in \mathbb{B}(\mathcal{H}^2)$. If $C_{\psi_1,\phi_1}(\mathcal{H}^2)\subseteq C_{\psi_2,\phi_2}(\mathcal{H}^2)$, then the following statements hold:
     \begin{enumerate}[(i)]
         \item If $c_0^{(1)}=0$ and $\psi_2^{(2)}=0$, then $c_0^{(2)}\leq 1$.
         \item If $c_0^{(1)}\geq 1$, $\psi_1^{(1)}\neq 0$ and $\psi_{2^{c_0^{(1)}}}^{(2)}=0$, then $c_0^{(2)}\leq c_0^{(1)}.$
     \end{enumerate}
\end{theorem}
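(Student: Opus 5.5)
The plan is to read off one Dirichlet coefficient. Every element of $C_{\psi_2,\phi_2}(\mathcal{H}^2)$ will have a certain coefficient forced to vanish when $c_0^{(2)}$ is too large. We then exhibit an element of $C_{\psi_1,\phi_1}(\mathcal{H}^2)$ whose corresponding coefficient is non-zero, which contradicts the range inclusion. The basic tool is the expansion from \cite{gordon1999composition}, already used in the proof of Theorem \ref{interior main}: if $\phi(s)=cs+\eta(s)$ with $c\geq 1$, then for $f(s)=\sum_m a_m m^{-s}$ we have
$$f\circ\phi(s)=a_1+\sum_{m\geq 2}a_m m^{-c_1}m^{-cs}\Big(1+\sum_{l\geq 2}a^{(m)}_l l^{-s}\Big).$$
Consequently $f\circ\phi$ is supported on $\{1\}\cup\{N: N\geq 2^{c}\}$, and its constant coefficient is $a_1$. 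By uniqueness of Dirichlet coefficients, the $N$-th coefficient of $\psi\cdot(f\circ\phi)$ is the Dirichlet convolution $\sum_{de=N}\psi_d\,(f\circ\phi)_e$.

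For (ii), set $c=c_0^{(1)}\geq 1$ and suppose, for contradiction, that $c'=c_0^{(2)}>c$; then $c'\geq 2$, so the expansion above applies to $\phi_2$. First, for any $g=C_{\psi_2,\phi_2}f$, the $2^{c}$-th coefficient of $g$ is $\sum_{de=2^c}\psi^{(2)}_d(f\circ\phi_2)_e$. Here $(f\circ\phi_2)_e$ vanishes for $1<e<2^{c'}$, and every divisor $e>1$ of $2^c$ satisfies $e\leq 2^c<2^{c'}$. So only $e=1$ survives, and the coefficient equals $\psi^{(2)}_{2^c}a_1=0$ by hypothesis. Next, take $f=2^{-s}$ in $C_{\psi_1,\phi_1}$. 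Then $f\circ\phi_1=2^{-c_1^{(1)}}2^{-cs}(1+\sum_{l\geq2}a^{(2)}_l l^{-s})$, whose support lies in $\{2^c l: l\geq1\}$ with leading coefficient $2^{-c_1^{(1)}}$. Multiplying by $\psi_1$, the only way to obtain the index $2^c$ is $1\cdot 2^c$, so the $2^c$-th coefficient of $C_{\psi_1,\phi_1}2^{-s}$ is $\psi^{(1)}_1 2^{-c_1^{(1)}}\neq 0$. This element cannot lie in $C_{\psi_2,\phi_2}(\mathcal{H}^2)$, a contradiction; hence $c_0^{(2)}\leq c_0^{(1)}$.

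For (i), argue the same way with $N=2$. Suppose $c_0^{(2)}\geq 2$. Then $f\circ\phi_2$ has zero coefficient at index $2$, so for every $g$ in the range of $C_{\psi_2,\phi_2}$ the coefficient of $2^{-s}$ is $\psi^{(2)}_1\cdot 0+\psi^{(2)}_2a_1=0$. It remains to find $f$ with $\langle C_{\psi_1,\phi_1}f,e_2\rangle\neq0$, that is, to show $C_{\psi_1,\phi_1}^*e_2\neq 0$. Since $c_0^{(1)}=0$, this is where I would test the simple inputs $f=1$ and $f=2^{-s}$. The input $f=1$ gives $\psi^{(1)}_2$. The input $f=2^{-s}$ gives $2^{-\phi_1}=2^{-c_1^{(1)}}\exp(-\log 2\sum_{k\geq2}c^{(1)}_k k^{-s})$, whose coefficient of $2^{-s}$ is $\psi^{(1)}_1 2^{-c_1^{(1)}}(-c^{(1)}_2\log 2)+\psi^{(1)}_2 2^{-c_1^{(1)}}$.

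I expect this last step to be the main obstacle. Unlike (ii), the hypotheses of (i) impose no non-degeneracy on $\psi_1$ or $\phi_1$. For instance, with $\psi_1=3^{-s}$ every element of the range has vanishing $2^{-s}$ coefficient. So the proof must either use a general argument that $C_{\psi_1,\phi_1}^*e_2\neq0$ (for example, via the kernel functions $C_{\psi_1,\phi_1}^*K_\omega=\overline{\psi_1(\omega)}K_{\phi_1(\omega)}$ and a density argument), or make explicit the implicit assumption, such as $\psi^{(1)}_2\neq0$ or $\psi^{(1)}_1c^{(1)}_2\neq0$, under which one of the test functions above works. I would first try to extract the needed non-vanishing from the test functions $n^{-s}$, $n\geq1$; failing that, I would state the extra assumption explicitly.
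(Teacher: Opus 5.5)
Your argument for (ii) is correct and is essentially the paper's. The paper shows that $2^{-c_0^{(1)}s}$ is orthogonal to every $C_{\psi_2,\phi_2}(n^{-s})$: for $n\ge 2$ this follows from the support of $n^{-\phi_2}$, and for $n=1$ from $\psi^{(2)}_{2^{c_0^{(1)}}}=0$. It then pairs $2^{-c_0^{(1)}s}$ with $C_{\psi_1,\phi_1}(2^{-s})$ to get $\overline{2^{-c_1^{(1)}}\psi_1^{(1)}}\neq 0$. You run the same coefficient computation on an arbitrary $f$ rather than on basis vectors. This is equivalent, provided you note that coefficient functionals are continuous, so the support statement passes from the partial sums of $f$ to $f$ itself.

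For (i), the obstacle you flag is not a weakness of your method: statement (i) is false as written, so no proof can close it. Take $\psi_1\equiv\psi_2\equiv 1$, $\phi_1\equiv 1$ and $\phi_2(s)=2s$. Here $\phi_1$ is a $0$-symbol, since $\phi_1(\mathbb{C}_0)=\{1\}\subset\mathbb{C}_{1/2}$. Then $C_{\psi_1,\phi_1}f=f(1)$, so $C_{\psi_1,\phi_1}(\mathcal{H}^2)$ is the space of constants. The constants lie in $C_{\phi_2}(\mathcal{H}^2)$ because $C_{\phi_2}1=1$. Moreover $c_0^{(1)}=0$ and $\psi^{(2)}_2=0$, yet $c_0^{(2)}=2$. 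In this example $\psi^{(1)}_1=1\neq 0$, so adding the hypothesis $\psi^{(1)}_1\neq0$ would not rescue (i).

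The paper's proof has exactly the gap you anticipated. After showing $2^{-s}\perp C_{\psi_2,\phi_2}(\mathcal{H}^2)$, it asserts without justification that for some $n$
$$\langle 2^{-s},C_{\psi_1,\phi_1}(n^{-s})\rangle=\overline{n^{-c_1^{(1)}}}\Bigl(\overline{\psi^{(1)}_2}-\overline{\psi^{(1)}_1c^{(1)}_2}\,\log n\Bigr)\neq 0.$$
Since the $n^{-s}$ span a dense subspace, $C_{\psi_1,\phi_1}^*e_2\neq 0$ holds if and only if this quantity is non-zero for some $n$. Taking $n=1$ and $n=2$, that happens if and only if $\psi^{(1)}_2\neq 0$ or $\psi^{(1)}_1c^{(1)}_2\neq 0$. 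So the kernel-function or density route you mention cannot succeed in general. The correct move is the second option you list: add the hypothesis ``$\psi^{(1)}_2\neq 0$ or $\psi^{(1)}_1c^{(1)}_2\neq 0$''. This is exactly what your test functions $f=1$ and $f=2^{-s}$ require, and under it your argument proves (i).
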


\begin{proof}
    \begin{enumerate}[(i)]
        \item Let $c_0^{(1)}=0$. If possible, let $c_0^{(2)}> 1$. Then $n^{c_0^{(2)}}\geq n^2\geq 2^2$ for all $n\geq 2$. Now,
        \begin{align*}
            \langle 2^{-s},C_{\psi_2,\phi_2}(n^{-s})\rangle=\left\langle 2^{-s}, \sum_{m=1}^\infty \psi_m^{(2)} m^{-s}\left(n^{-c_0^{(2)}s}n^{-c_1^{(2)}}n^{-\sum_{k=2}^\infty c_k^{(2)} k^{-s}}\right)\right\rangle=0
        \end{align*}
        for all $n\geq2$. For $n=1$,
        $$\langle2^{-s},C_{\psi_2,\phi_2}(1)\rangle=\psi_2^{(2)}=0.$$
        Therefore, $2^{-s}\in (C_{\psi_2,\phi_2}\mathcal{H}^2)^\perp$ which implies $2^{-s}\in (C_{\psi_1,\phi_1}\mathcal{H}^2)^\perp$. But for some $n\in \mathbb{N}$,
        \begin{align*}
             \langle 2^{-s},C_{\psi_1,\phi_1}(n^{-s})\rangle&=\left\langle 2^{-s}, \sum_{m=1}^\infty \psi_m^{(1)} m^{-s}\left(n^{-c_1^{(1)}}n^{-\sum_{k=2}^\infty c_k^{(1)} k^{-s}}\right)\right\rangle\\
             &=\left\langle 2^{-s}, \left(\sum_{m=1}^\infty \psi_m^{(1)} m^{-s}\right) n^{-c_1^{(1)}}\prod_{k=2}^\infty\left(1+\sum_{l=1}^\infty\frac{(-c_k^{(1)}\log n)^l}{l!}k^{-ls} \right)\right\rangle\\
             &=-\overline{\psi_1^{(1)}n^{-c_1^{(1)}}c_2^{(1)}\log n}+\overline{\psi_2^{(1)}n^{-c_1^{(1)}}}\\
             &\neq 0.
        \end{align*}
        which is a contradiction. Hence $c_0^{(2)}\leq 1$.

        \item Let us assume that $c_0^{(2)}> c_0^{(1)}$. Then $n^{c_0^{(2)}}> n^{c_0^{(1)}}\geq 2^{c_0^{(1)}}$ for all $n\geq 2$. Now,
        \begin{align*}
            \langle 2^{-c_0^{(1)}s},C_{\psi_2,\phi_2}(n^{-s})\rangle=\left\langle 2^{-c_0^{(1)}s}, \sum_{m=1}^\infty \psi_m^{(2)} m^{-s}\left(n^{-c_0^{(2)}s}n^{-c_1^{(2)}}n^{-\sum_{k=2}^\infty c_k^{(2)} k^{-s}}\right)\right\rangle=0
        \end{align*}
        for all $n\geq2$. For $n=1$,
        $$\langle2^{-c_0^{(1)}s},C_{\psi_2,\phi_2}(1)\rangle=\psi_{2^{c_0^{(1)}}}^{(2)}=0.$$
        Therefore, $2^{-c_0^{(1)}s}\in (C_{\psi_2,\phi_2}\mathcal{H}^2)^\perp$ which implies $2^{-c_0^{(1)}s}\in (C_{\psi_1,\phi_1}\mathcal{H}^2)^\perp$. But
        \begin{align*}
            \langle 2^{-c_0^{(1)}s},C_{\psi_1,\phi_1}(2^{-s})\rangle&=\left\langle 2^{-c_0^{(1)}s}, \sum_{m=1}^\infty \psi_m^{(1)} m^{-s}\left(2^{-c_0^{(1)}s}2^{-c_1^{(1)}}2^{-\sum_{k=2}^\infty c_k^{(1)} k^{-s}}\right)\right\rangle\\
            &=\overline{2^{-c_1^{(1)}} \psi_1^{(1)}}\\
            &\neq 0
        \end{align*}
        since $\psi_1^{(1)}\neq 0$. This contradicts that $C_{\psi_1,\phi_1}(\mathcal{H}^2)\subseteq C_{\psi_2,\phi_2}(\mathcal{H}^2)$. Hence $c_0^{(2)}\leq c_0^{(1)}$.
        \end{enumerate}
\end{proof}

\noindent Now, we prove our main result which characterizes the reductive weighted composition operators on Hardy space of Dirichlet series.

\begin{theorem}\label{reductive}
    Let $\displaystyle\phi(s)=c_0s+\sum_{k=1}^\infty c_k k^{-s}$ with $c_0(\in\mathbb{N})\geq 1$ and $\psi(s)=\displaystyle\sum_{m=1}^\infty \psi_m m^{-s}\in \mathcal{H}^2$ with $\psi_1\neq 0$ such that $C_{\psi,\phi}\in \mathbb{B}(\mathcal{H}^2)$. Then $C_{\psi,\phi}$ is reductive if and only if $\phi(s)=s+c_1$ with $\Re c_1\geq 0$ and $\psi$ is a non-zero constant function.
\end{theorem}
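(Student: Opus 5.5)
The plan is to prove the two implications separately.

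\emph{Sufficiency.} Suppose $\phi(s)=s+c_1$ with $\Re c_1\geq 0$ and $\psi\equiv a\neq 0$. Then $C_{\psi,\phi}e_n=a\,n^{-c_1}e_n$ for every $n$, so $C_{\psi,\phi}$ is diagonal in the orthonormal basis $\{e_n\}$. In particular it is normal, as in Proposition \ref{normal}, via \cite[Proposition 2.5]{yao2021complex}. A diagonal operator need not be reductive in general, so I would use the specific structure of the eigenvalues $\lambda_n=a\,n^{-c_1}$. Write $M$ for an arbitrary invariant subspace.

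\begin{itemize}
\item If $\Re c_1>0$, then $|\lambda_n|\to 0$, so $C_{\psi,\phi}$ is a compact normal operator. By a classical theorem (Ando; Wermer), compact normal operators are reductive. Concretely, $M$ is spanned by its intersections with the finite-dimensional eigenspaces, because on the invariant subspace $M$ the Riesz projections onto isolated eigenvalues are norm limits of polynomials in the operator.
\item If $\Re c_1=0$, then $|\lambda_n|=|a|$ for all $n$, so $C_{\psi,\phi}/a$ is unitary. Here I would invoke Wermer's theorem: a unitary operator is reductive if and only if its spectral measure gives zero weight to some measure-theoretic obstruction. It suffices that the spectral measure is purely atomic. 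This holds because the operator is diagonal with point spectrum $\{n^{-c_1}\}$, and for a diagonal unitary the von Neumann algebra generated by it equals the weakly closed algebra it generates. The weak closure of the polynomials contains the atomic spectral projections (Wermer).
\end{itemize}

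In both cases every invariant subspace is spanned by eigenvectors, hence it is reducing.

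\emph{Necessity.} Assume $C_{\psi,\phi}$ is reductive. Take $M=\operatorname{span}\{1\}^\perp$. By Lemma \ref{adj1}(ii), $C^*_{\psi,\phi}1=\overline{\psi_1}$, so $\operatorname{span}\{1\}$ is invariant under $C^*$, and therefore $M$ is invariant under $C$. Reductivity makes $M$ reducing, so $\operatorname{span}\{1\}$ is invariant under $C$ itself. This gives $C_{\psi,\phi}1=\psi\in\operatorname{span}\{1\}$, so $\psi\equiv\psi_1$ is a nonzero constant.

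Now $C_{\psi,\phi}=\psi_1C_\phi$, and reductivity passes to $C_\phi$. To pin down $\phi$, I would use the same mechanism with finite-codimensional subspaces. Consider
\[
M_N=\overline{\operatorname{span}}\{e_n:n> N\}.
\]
Because $c_0\geq 1$, the Dirichlet series $C_\phi e_n=n^{-\phi(s)}$ begins with $n^{-c_0s}$ (see the expansions of \cite{gordon1999composition} used in Theorem \ref{interior main}). Hence $C_\phi$ is ``lower triangular'' with respect to the ordering of $\mathbb{N}$, and each $M_N$ is invariant. Reductivity then forces each $M_N^\perp=\operatorname{span}\{e_1,\dots,e_N\}$ to be invariant as well. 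Taking $N=2$, $C_\phi e_2=2^{-c_0s}2^{-c_1}\bigl(1+\sum_{m\geq 2}a^{(2)}_m m^{-s}\bigr)$ must lie in $\operatorname{span}\{e_1,e_2\}$. This forces $c_0=1$ and all $a^{(2)}_m=0$.

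To conclude, I would use the expansion from the $c_0=1$ case of Theorem \ref{interior main}. If $\phi(s)=s+c_1+c_rr^{-s}+\cdots$ with $c_r\neq0$, then $C_\phi e_2$ contains the term $-2^{-c_1}c_r\log 2\,(2r)^{-s}$, which is nonzero. This contradicts invariance of $\operatorname{span}\{e_1,\dots,e_{2r}\}$ once we choose $N=2r$. Hence $\phi(s)=s+c_1$, and $\Re c_1\geq 0$ is forced by boundedness, via Theorem \ref{bdd}.

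The main obstacle I expect is justifying reductivity in the unitary case $\Re c_1=0$ of the sufficiency direction. There I will have to cite Wermer's characterization carefully rather than argue directly.
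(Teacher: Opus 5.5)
Your proof is correct. The necessity half follows essentially the paper's route. The paper also uses the subspaces $\overline{\operatorname{span}}\{k^{-s}:k\ge n\}$ (your $M_{n-1}$), which are invariant because $c_0\ge 1$. It lets reductivity make their orthogonal complements invariant, and then reads off Dirichlet coefficients. Your use of Lemma~\ref{adj1}(ii) is just the case $n=2$. You extract both $c_0=1$ and $c_k=0$ ($k\ge 2$) from the single condition $C_\phi e_2\in\operatorname{span}\{e_1,e_2\}$, whereas the paper gets $c_0=1$ via $C^*_{\psi,\phi}2^{-c_0 s}$ before knowing $\psi$ is constant.

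There is one slip in your last step. The nonzero coefficient $-2^{-c_1}c_r\log 2$ of $(2r)^{-s}$ in $C_\phi e_2$ contradicts your $N=2$ conclusion $a^{(2)}_r=0$, i.e.\ invariance of $\operatorname{span}\{e_1,e_2\}$. It does not contradict invariance of $\operatorname{span}\{e_1,\dots,e_{2r}\}$, since that span contains $e_{2r}$.

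The genuine difference is in sufficiency. The paper only cites \cite[Theorem 3.4]{wang2015invariant}, tacitly using that $C_{\psi,\phi}=aC_\phi$ has the same invariant subspaces as $C_\phi$. You instead note that $C_{\psi,\phi}=\operatorname{diag}(a\,n^{-c_1})$ and reduce to two classical facts:
\begin{itemize}
\item compact normal operators are reductive, which covers $\Re c_1>0$;
\item unitaries with purely atomic spectral measure are reductive, which covers $\Re c_1=0$.
\end{itemize}
This is self-contained and explains why the cases split. For $c_1\in i\mathbb{R}\setminus\{0\}$ the eigenvalues $a\,n^{-c_1}$ are dense on the circle $|z|=|a|$. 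So $C^*_{\psi,\phi}$ is not a norm limit of polynomials in $C_{\psi,\phi}$, and neither a Riesz-projection nor a Mergelyan argument suffices.

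You can avoid Wermer's theorem in that case. Let $U=C_{\psi,\phi}/a$ and let $\lambda$ be any eigenvalue of $U$. The Ces\`aro means $\frac1N\sum_{k=0}^{N-1}(\overline{\lambda}U)^k$ are uniformly bounded and converge strongly to the eigenprojection $P_\lambda$; check this on each $e_n$. Hence $P_\lambda M\subseteq M$ for every invariant $M$, and $M=\overline{\bigoplus_\lambda P_\lambda M}$ is reducing.
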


\begin{proof}
    The sufficiency is due to \cite[Theorem 3.4]{wang2015invariant}. Conversely, let $C_{\psi,\phi}$ be reductive. Now, for $n\geq1$, consider the subspace $\mathcal{M}_n=\overline{\text{span}\ \{k^{-s}:k\geq n\}}$. For any $f\in \mathcal{M}_n$, let $\displaystyle f(s)=\sum_{p=n}^\infty b_p p^{-s}$. Then
    \begin{align*}
        C_{\psi,\phi}f(s)&=\sum_{m=1}^\infty \psi_m m^{-s}\left(\sum_{p=n}^\infty b_p p^{-c_0 s}p^{-c_1} p^{\sum_{k=2}^\infty c_k k^{-s}}\right)\\
        &=\sum_{m=1}^\infty \psi_m m^{-s}\left(\sum_{p=n}^\infty b_p p^{-c_0 s}p^{-c_1}\prod_{k=2}^\infty\left(1+\sum_{l=1}^\infty\frac{(-c_k\log p)^l}{l!}k^{-ls} \right)\right)
    \end{align*}
     from \cite{gordon1999composition}. Since $c_0\neq 0$, therefore $C_{\psi,\phi}f\in\mathcal{M}_n$ and hence $\mathcal{M}_n\in$ Lat $C_{\psi,\phi}$ for all $n\geq 1$. Also $C_{\psi,\phi}$ is reductive, so $\mathcal{M}_n\in$ Lat $C_{\psi,\phi}^*$ for all $n\geq 1$. Hence $C_{\psi,\phi}^* 2^{-c_0 s}\in \mathcal{M}_{2^{c_0 }}$. If possible, let $c_0>1$. Then for any $q$ with $q<2^{c_0}$, we have $q^{-s}\in\mathcal{M}_{2^{c_0 }}^\perp$. But
     \begin{align*}
         \langle C_{\psi,\phi}^* 2^{-c_0 s},2^{-s}\rangle&=\langle 2^{-c_0 s}, \psi(s) 2^{-\phi(s)}\rangle\\
         &=\left\langle 2^{-c_0 s},\left(\sum_{m=1}^\infty \psi_m m^{-s}\right) 2^{-c_0 s} 2^{-c_1}\prod_{k=2}^\infty\left(1+\sum_{l=1}^\infty\frac{(-c_k\log 2)^l}{l!}k^{-ls} \right)\right\rangle\\
         &= \overline{\psi_1}2^{-\overline{c_1}}\\
         &\neq 0
     \end{align*}
     which is a contradiction. Thus $c_0=1$. Now, let $\phi(s)=s+c_1+\sum_{k=2}^\infty c_k k^{-s}$. Since $\mathcal{M}_n\in$ Lat $C_{\psi,\phi}$ for all $n\geq 1$ and $C_{\psi,\phi}$ is reductive, therefore $\mathcal{M}_n^\perp\in$ Lat $C_{\psi,\phi}$ for all $n\geq 2$, where $\mathcal{M}_n^\perp=\text{span}\ \{1,2^{-s},\cdots,(n-1)^{-s}\}$ for $n\geq 2$. For $n=2$, $C_{\psi,\phi}1\in \text{span}\{1\}$ implies that $\psi_m=0$ for all $m\geq 2$. Consider $n\geq3$. Since $\mathcal{M}_n^\perp\in$ Lat $C_{\psi,\phi}$, therefore we have,
     \begin{align*}
         C_{\psi,\phi}((n-1)^{-s})&=\psi(s)(n-1)^{-\phi(s)}\\
         &=\psi_1\left((n-1)^{-s}(n-1)^{-c_1}\prod_{k=2}^\infty\left(1+\sum_{l=1}^\infty\frac{(-c_k\log p)^l}{l!}k^{-ls}\right)\right)\\
         &\in \mathcal{M}_n^\perp
     \end{align*}
     which implies $c_k=0$ for all $k\geq 2$. Therefore, $\phi(s)=s+c_1$ with $\Re c_1\geq 0$ and $\psi(s)=\psi_1$.

\end{proof}

\begin{remark}
    From Theorem \ref{reductive} and \cite[Proposition 2.5]{yao2021complex}, we have, for $\displaystyle\phi(s)=c_0s+\sum_{k=1}^\infty c_k k^{-s}$ with $c_0(\in\mathbb{N})\geq 1$ and $\psi(s)=\displaystyle\sum_{m=1}^\infty \psi_m m^{-s}\in \mathcal{H}^2$ with $\psi_1\neq 0$, $C_{\psi,\phi}$ is normal if and only if it is reductive and in that case, $W(C_{\psi,\phi})$ is given by Proposition \ref{normal}.
\end{remark}

\noindent Now, we show the non-existence of non-trivial finite dimensional invariant subspace of some weighted composition operator on $\mathcal{H}^2$. This result reduces to \cite[Theorem 3.6]{wang2015invariant} when $\psi\equiv 1$. 

\begin{theorem}
    Let $\phi(s)=c_0s+\eta(s)$ be a $c_0$-symbol, where $c_0 (>1)\in \mathbb{N}$ and let $\psi(s)=\displaystyle\sum_{n=1}^\infty \psi_n n^{-s}\in \mathcal{H}^2$ such that $C_{\psi,\phi}\in \mathbb{B}(\mathcal{H}^2)$. Suppose $E(\lambda)$ denotes the eigen space corresponding to the eigenvalue $\lambda$ of $C_{\psi,\phi}$. Then there does not exist any non-trivial finite dimensional invariant subspace $M\nsubseteq E(\psi_1)$ of $C_{\psi,\phi}$ such that $E(\psi_1)\cap M$ is a reducing subspace of $C_{\psi,\phi}$.
\end{theorem}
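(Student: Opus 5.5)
Suppose, for contradiction, that $M$ is a non-trivial finite dimensional invariant subspace of $C_{\psi,\phi}$ with $M\nsubseteq E(\psi_1)$, and that $N:=E(\psi_1)\cap M$ reduces $C_{\psi,\phi}$. The plan follows the strategy of \cite[Theorem 3.6]{wang2015invariant}. We use the triangular structure of $C_{\psi,\phi}$ in the basis $\{n^{-s}\}$ to show that every eigenvalue of $C_{\psi,\phi}$ restricted to $M$ equals $\psi_1$. Then we use the reducing hypothesis to split off $N$ and obtain a contradiction.

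\emph{Step 1 (triangularity).} Order the basis $e_n(s)=n^{-s}$ by $n$. For $n\geq 2$, the Gordon--Hedenmalm expansion used in the proof of Theorem \ref{interior main} gives $n^{-\phi(s)}=n^{-c_0s}n^{-c_1}(1+\sum_{m\ge2}a_m^{(n)}m^{-s})$. Hence $C_{\psi,\phi}e_n=\psi\cdot n^{-\phi}$ lies in $\mathcal{M}_{n^{c_0}}=\overline{\text{span}}\{k^{-s}:k\geq n^{c_0}\}$, with leading coefficient $\psi_1 n^{-c_1}$ at $k=n^{c_0}$. Since $c_0>1$, we have $n^{c_0}>n$, so $C_{\psi,\phi}$ maps $\mathcal{M}_n$ into $\mathcal{M}_{n+1}$ for $n\ge2$. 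On the first coordinate, $C_{\psi,\phi}1=\psi$, so $\langle C_{\psi,\phi}f,1\rangle=\psi_1\langle f,1\rangle$.

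\emph{Step 2 (spectrum on $M$ is $\{\psi_1\}$).} Let $T=C_{\psi,\phi}|_M$, which acts on a finite dimensional space, and take an eigenvector $f\in M$ with $Tf=\lambda f$. Let $n_0$ be the smallest index with $f_{n_0}\neq0$. If $n_0\geq2$, Step 1 gives $C_{\psi,\phi}f\in\mathcal{M}_{n_0+1}$, so $\lambda f_{n_0}=0$ and hence $\lambda=0$. In that case $C_{\psi,\phi}f=0$, which is impossible: $\psi\neq 0$ (because $\psi_1\ne0$ is implicitly needed, otherwise $E(\psi_1)=E(0)$) and $C_\phi$ is injective. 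If $n_0=1$, then comparing constant terms gives $\lambda=\psi_1$. The case $\psi_1=0$ needs a separate argument, and I expect to dispatch it via nilpotency: there every vector is pushed strictly deeper into the filtration $\mathcal{M}_n$, so $T$ is nilpotent on a finite dimensional space. An injective nilpotent operator on a nonzero space is impossible, so $M=\{0\}$. Thus in all cases $\sigma(T)=\{\psi_1\}$, and $T-\psi_1 I$ is nilpotent on $M$.

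\emph{Step 3 (use the reducing hypothesis).} Since $N$ reduces $C_{\psi,\phi}$, the space $M':=M\ominus N$ is invariant under $C_{\psi,\phi}$. Indeed, for $f\in M'$ we have $C_{\psi,\phi}f\in M$, and $C_{\psi,\phi}f\perp N$ because $C^*_{\psi,\phi}N\subseteq N$. Moreover $M'\neq\{0\}$ since $M\nsubseteq E(\psi_1)$. By Step 2 applied to $M'$, the operator $C_{\psi,\phi}|_{M'}$ has an eigenvector $g$ with eigenvalue $\psi_1$. Then $g\in E(\psi_1)\cap M=N$, while also $g\in M'\perp N$, so $g=0$, a contradiction.

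The main obstacle is Step 2: making the triangularity rigorous requires that the product expansion of $n^{-\phi}$ only produces indices $\geq n^{c_0}$, and that multiplying by $\psi$ preserves the filtration. This holds because all Dirichlet indices are $\geq1$, but it must be checked for general $\eta\in\mathcal{D}$ through the convergence results of \cite{gordon1999composition}. The degenerate case $\psi_1=0$ also needs care; there, injectivity of $C_{\psi,\phi}$ (which follows from $\psi\not\equiv0$ and $\phi$ non-constant) does the work.
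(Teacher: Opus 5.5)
Your proof is correct and follows the paper's argument: the point spectrum of $C_{\psi,\phi}$ lies in $\{\psi_1\}$ when $c_0>1$, and the reducing hypothesis then makes $M\ominus(E(\psi_1)\cap M)$ a nonzero finite-dimensional invariant subspace, whose eigenvector would have to lie both in $E(\psi_1)\cap M$ and in its orthogonal complement. The only difference is that the paper cites \cite[Lemma 5.3]{maofa2018weighted} for the spectral inclusion, whereas you derive it from the triangular action of $C_{\psi,\phi}$ on the basis $\{n^{-s}\}$. Your separate treatment of $\psi_1=0$ is unnecessary: in that case both subcases of your Step 2 already give $\lambda=0=\psi_1$, and injectivity is only needed when $\psi_1\neq 0$, where $\psi$ is automatically not identically zero.
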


\begin{proof}
     Let $M\nsubseteq E(\psi_1)$ be any non-trivial finite dimensional invariant subspace of $C_{\psi,\phi}$ such that $E(\psi_1)\cap M$ is a reducing subspace of $C_{\psi,\phi}$. From \cite[Lemma 5.3]{maofa2018weighted}, for $c_0>1$, the point spectrum of $C_{\psi,\phi}$ is a subset of $\{\psi_1\}$. Since $M$ is finite dimensional, therefore, the operator $\restr{C_{\psi,\phi}}{M}$ has eigenvalue $\psi_1$ with eigenvector in $M$. Consider $K=E(\psi_1)\cap M\subset M$ and let $N=M\cap K^\perp$. As $K$ is reducing subspace of $C_{\psi,\phi}$, it follows that $K^\perp$ is invariant under $C_{\psi,\phi}$ which implies $N\in$ Lat$(C_{\psi,\phi})$. Also $N$ is finite dimensional, so $\restr{C_{\psi,\phi}}{N}$ has an eigenvalue other than $\psi_1$ which contradicts \cite[Lemma 5.3]{maofa2018weighted}.
\end{proof}

\section{Additional observations}\label{Sec5}
In this section, we give some results on the Davis-Wielandt shell (DW-shell) of composition operators on Hardy space of Dirichlet series along with some natural questions. Introduced by Davis \cite{davis1968shell} and Wielandt \cite{wielandt1955eigenvalues}, the DW-shell $DW(T)\subset \mathbb{C}\times\mathbb{R}$ of an operator $T\in B(H)$ is defined as
$$ DW(T)= \{(\langle Tx , x \rangle,\langle Tx , Tx \rangle)  : x \in H ,\ \| x \| =1 \}. $$
It follows that the projection of the set $DW(T)$ onto the first co-ordinate is the numerical range $W(T)$. Hence, $DW(T)$ provides additional information about the operator. Moreover, in finite dimensional case, the DW-shell completely characterizes the normal operators. An $n\times n$ complex matrix $A$ is normal if and only if $DW(A)$, regarded as a subset of $\mathbb{C}\times\mathbb{R}\simeq\mathbb{R}^3$, is a polyhedron \cite{li2009davis}. The DW-shell of composition operators was studied by Liu and Liu \cite{liu2026davis} on Hardy space. Motivated by them, we analyze the same for the Hardy space of Dirichlet series. We start with the DW-shell of composition operators with constant symbol.

\begin{proposition}
    Let $\phi\equiv c_1(\in\mathbb{C}_{1/2})$. Then $DW(C_\phi)$ is the region
    $$\{(x_1,x_2,x_3):c^2(x_1^2+x_2^2)+x_3^2-2x_1x_3-(c^2-1)x_3\leq 0\}$$
    where $c=\|K_{c_1}\|=(\zeta(2\Re c_1))^{1/2}$.
\end{proposition}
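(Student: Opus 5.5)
We will make the operator explicit as a rank-one operator, parametrize unit vectors by their components along the two relevant directions, and eliminate the parameters to obtain the stated quadric inequality. For $\phi\equiv c_1$ we have $C_\phi f=f(c_1)=\langle f,K_{c_1}\rangle\,e_1$, where $e_1\equiv 1$. Since $\langle K_{c_1},e_1\rangle=\overline{e_1(c_1)}=1$, we decompose $K_{c_1}=e_1+v$ with $v\perp e_1$. Here $\|v\|^2=c^2-1$ and $c^2=\zeta(2\Re c_1)>1$, so $b:=\sqrt{c^2-1}>0$. Put $w=v/b$. Then every unit vector has the form $f=\alpha e_1+\beta w+\gamma z$ with $z\perp e_1,w$, $\|z\|=1$ and $|\alpha|^2+|\beta|^2+|\gamma|^2=1$. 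The component $\gamma z$ enters neither coordinate, so only the constraint $|\alpha|^2+|\beta|^2\le 1$ survives.

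Set $u=\langle f,K_{c_1}\rangle=\alpha+b\beta$. Then $\langle C_\phi f,f\rangle=u\overline{\alpha}$ and $\|C_\phi f\|^2=|u|^2$. So $DW(C_\phi)$ is the set of points $(x,x_3)$ with $x=x_1+ix_2=u\overline{\alpha}$ and $x_3=|u|^2$, where $\alpha,\beta$ range over $|\alpha|^2+|\beta|^2\le 1$.

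To eliminate the parameters, first take $x_3>0$. Then $\alpha=\overline{x}/\overline{u}$, so $|\alpha|^2=|x|^2/x_3$. Since $\beta=(u-\alpha)/b$,
\[
|\beta|^2=\frac{x_3-2x_1+|x|^2/x_3}{c^2-1}.
\]
Substituting into $|\alpha|^2+|\beta|^2\le 1$ and multiplying by $(c^2-1)x_3>0$ gives
\[
c^2(x_1^2+x_2^2)+x_3^2-2x_1x_3-(c^2-1)x_3\le 0 .
\]
If $x_3=0$, then $u=0$, hence $x=0$, and the origin also satisfies this inequality. Conversely, any point of the region with $x_3=0$ forces $c^2|x|^2\le 0$, so it is the origin, which is attained by taking $f=w$.

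For the reverse inclusion, take $(x_1,x_2,x_3)$ in the region with $x_3>0$. We choose any $u$ with $|u|^2=x_3$, then set $\alpha=\overline{x}/\overline{u}$ and $\beta=(u-\alpha)/b$. Running the computation above backwards shows $|\alpha|^2+|\beta|^2\le 1$. We then complete $f$ to a unit vector with $\gamma=(1-|\alpha|^2-|\beta|^2)^{1/2}$ and any unit $z\perp e_1,w$; such a $z$ exists because $\mathcal H^2$ is infinite dimensional. This realizes the given point, which proves equality of the two sets.

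The step needing the most care is the treatment of the free component $\gamma z$. It is exactly what turns the equality $|\alpha|^2+|\beta|^2=1$ into the inequality defining a solid region, and it requires a vector orthogonal to both $e_1$ and $w$. The degenerate case $x_3=0$ must also be handled separately, as above.
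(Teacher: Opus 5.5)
Your argument is correct in substance, but two small points need fixing.

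First, your witness for the origin is wrong. Since $K_{c_1}=e_1+bw$, you have $w(c_1)=\langle w,K_{c_1}\rangle=b\neq 0$, so $f=w$ produces the point $(0,0,c^2-1)$, not the origin. You need a unit vector orthogonal to $K_{c_1}$, for example $f=z$ (that is, $\alpha=\beta=0$, $\gamma=1$) or $f=(be_1-w)/c$.

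Second, in the reverse inclusion you treat only points of the region with $x_3>0$ and $x_3=0$. You should also check that the region contains no point with $x_3<0$. This follows from
\[
c^2(x_1^2+x_2^2)+x_3^2-2x_1x_3-(c^2-1)x_3=c^2x_2^2+\Bigl(cx_1-\frac{x_3}{c}\Bigr)^2+\Bigl(1-\frac{1}{c^2}\Bigr)x_3^2-(c^2-1)x_3,
\]
which is strictly positive when $x_3<0$ because $c>1$.

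With these repairs, your proof takes a different route from the paper's. The paper uses the same two-dimensional subspace $M=\mathrm{span}\{1,K_{c_1}\}$ but argues structurally. It observes that $M$ is reducing and writes $C_\phi$ as unitarily equivalent to $T_1\oplus 0$ with $T_1=\begin{bmatrix}0&-\sqrt{c^2-1}\\ 0&1\end{bmatrix}$. It then checks that points of $DW(T_1)$ satisfy the quadric equation with equality, and takes the convex hull of $DW(T_1)$ with the origin contributed by the zero summand.

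You instead keep the $M^\perp$-component as a slack $|\gamma|^2$. This turns the unit-sphere constraint on $(\alpha,\beta)$ into a unit-ball constraint, and a single elimination gives the solid region with both inclusions proved. Your $\gamma$ plays exactly the role of the weight $1-t$ in $DW(T_1\oplus 0)=\{tp:\ p\in DW(T_1),\ t\in[0,1]\}$.

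Your route buys completeness. The paper only shows that $DW(T_1)$ lies on the ellipsoid surface, not that it fills it. It also leaves implicit why the cone from the origin is the solid ellipsoid; this holds because the origin itself lies on that surface. The paper's route, in turn, produces an explicit $2\times 2$ matrix model of $C_\phi$. That makes the geometry transparent and can be reused for related questions such as the numerical range.
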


\begin{proof}
    Consider the functions $\displaystyle f(s)=\frac{K_{c_1}(s)-c^2}{c\sqrt{c^2-1}}$ and $g(s)=1$, where $c=\|K_{c_1}\|>1$. It follows that $C_\phi f=0$ and $C_\phi g=1$, therefore, $f$ and $g$ are unit eigenvectors corresponding to the eigenvalues $0$ and $1$, respectively. By Gram-Schmidt orthogonalization process, we get the unit vectors $\displaystyle u_1(s)=\frac{K_{c_1}(s)-c^2}{c\sqrt{c^2-1}}$ and $\displaystyle u_2(s)=\frac{K_{c_1}(s)}{c}$. It is easy to check that $M=$\ span$\{u_1,u_2\}$ is a reducing subspace of $C_\phi$. Thus $C_\phi$ is unitarily equivalent to $T_1\oplus T_2$, where $T_1$ and $T_2$ are the restrictions of $C_\phi$ to $M$ and $M^\perp$, respectively. Also, the matrix representation of $T_1$ is given by $$T_1=\begin{bmatrix}
         0 & -\sqrt{c^2-1}\\
         0 & 1
     \end{bmatrix}.$$ Since $\text{rank}(C_\phi)=1=\text{rank}(T_1)$, therefore rank$(T_2)=0$ implies $T_2=0$. Thus, $DW(C_\phi)$ is the convex hull of $\{(0,0,0)\}$ and $DW(T_1)$. Hence, it is sufficient to discuss $DW(T_1)$. Any unit vector $u\in \mathbb{C}^2$ can be parameterized as $u=\begin{bmatrix}
        \sqrt{1-r^2}e^{i\theta_1}\\
        re^{i\theta_2}
    \end{bmatrix}$ where $r\in [0,1]$ and $\theta_1,\theta_2\in[0,2\pi)$. Then we get
    $$\langle T_1u,u\rangle=-\sqrt{c^2-1}r\sqrt{1-r^2}e^{i(\theta_2-\theta_1)}+r^2\ \text{and}\ \langle T_1u,T_1u\rangle=c^2r^2.$$
    Let $(x_1,x_2,x_3)=(\Re\langle T_1u,u\rangle,\Im\langle T_1u,u\rangle,\langle T_1u,T_1u\rangle)\in DW(T_1)$, then we have,
    $$c^2(x_1^2+x_2^2)+x_3^2-2x_1x_3-(c^2-1)x_3=0.$$
    Hence the result follows.
\end{proof}

Now, we deal with composition operators with symbol $\phi(s)=c_0s+c_1$.

\begin{proposition}\label{isometry}
        Let $\phi(s)=c_0s+c_1$ be a $c_0$-symbol, where $c_0\in \mathbb{N}\cup\{0\}$, then the following statements hold.
        \begin{enumerate}[label=(\roman*)]
            \item If $c_0\geq 2$ and $c_1=ik$ where $k\in\mathbb{R}$, then $DW(C_\phi)=\{(\mu,1):\mu\in\mathbb{D}\}\cup\{(1,0,1)\}$.
            \item If $\Re c_1>0$, then 
            $DW(C_\phi)\subseteq\{(\lambda,r):|\lambda|^2\leq r\ \text{and}\ 0<r\leq1\}.$
        \end{enumerate}
\end{proposition}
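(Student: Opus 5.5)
The plan is to use the fact that when $\phi(s)=c_0s+c_1$ with $c_0\geq 1$, the operator $C_\phi$ acts diagonally-with-shift on the orthonormal basis $\{e_n\}$:
$$C_\phi e_n(s)=n^{-\phi(s)}=n^{-c_1}\,e_{n^{c_0}}(s),\qquad n\geq 1.$$
Since $n\mapsto n^{c_0}$ is injective, $C_\phi$ sends distinct basis vectors to orthogonal vectors. Hence for $f=\sum a_n e_n$ we get
$$\|C_\phi f\|^2=\sum_{n\geq 1}|a_n|^2 n^{-2\Re c_1}.$$
Both parts of Proposition \ref{isometry} will be read off from this formula, with $c_0\geq 1$ assumed throughout. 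In part (ii) the case $c_0=0$ must really be excluded. For the constant symbol, $C_\phi f=f(c_1)\cdot 1$, so $\|C_\phi f\|$ can vanish, and $\|C_\phi\|=\zeta(2\Re c_1)^{1/2}>1$. I will therefore prove (ii) for $c_0\geq1$ and remark on this.

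For (i), take $c_1=ik$, so $|n^{-c_1}|=1$ and $C_\phi$ is an isometry. Thus every point of $DW(C_\phi)$ has last coordinate $\langle C_\phi x,C_\phi x\rangle=1$, and it remains to show $W(C_\phi)=\mathbb{D}\cup\{1\}$.

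First, $C_\phi e_1=e_1$, and by Lemma \ref{adj1}(ii) with $\psi\equiv1$ we have $C_\phi^*1=1$. So $\operatorname{span}\{e_1\}$ is reducing and $C_\phi\cong 1\oplus S$, where $S$ is the restriction of $C_\phi$ to $\overline{\operatorname{span}}\{e_n:n\geq2\}$.

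Next I decompose $\{n\geq 2\}$ into chains $m,m^{c_0},m^{c_0^2},\dots$, where $m$ ranges over integers $\geq2$ that are not perfect $c_0$-th powers. There are infinitely many such $m$ because $c_0\geq 2$. On each chain $S$ is a unilateral shift with unimodular weights. A diagonal unitary change of basis removes these weights, so $S$ is unitarily equivalent to a direct sum of countably many copies of the unilateral shift $U$.

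It is classical that $W(U)$ is the open unit disk. The numerical range of a direct sum is the convex hull of the union of the summands' numerical ranges, so $W(S)=\mathbb{D}$.

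Now write a unit vector as $x=ae_1+y$ with $y\perp e_1$. Then $\langle C_\phi x,x\rangle=|a|^2+\langle Sy,y\rangle$. If $y\neq 0$, this is a convex combination of $1$ and a point of the open disk with positive weight on the disk point, so it lies in $\mathbb{D}$. If $y=0$, it equals $1$. Conversely every $\mu\in\mathbb{D}$ is attained using $y$ alone.

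This gives exactly $\{(\mu,1):\mu\in\mathbb{D}\}\cup\{(1,0,1)\}$. The step needing the most care is the identification of $S$ with a sum of shifts, namely the chain decomposition and the removal of the weights $n^{-ik}$.

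For (ii), $\Re c_1>0$ gives $n^{-2\Re c_1}\leq 1$, so $r=\|C_\phi x\|^2\leq \|x\|^2=1$. For a unit vector $x$ some $a_n\neq0$, so $r>0$. Finally, Cauchy--Schwarz gives
$$|\lambda|^2=|\langle C_\phi x,x\rangle|^2\leq\|C_\phi x\|^2\|x\|^2=r,$$
which is the claimed inclusion.
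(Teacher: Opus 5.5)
Your proof is correct.

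\textbf{Part (ii).} This is essentially the paper's argument. The paper also uses $C_\phi e_n=n^{-c_1}e_{n^{c_0}}$ to write $\|C_\phi f\|^2=\sum|a_n|^2 n^{-2\Re c_1}$. It then reads off $0<r\le 1$ as a convex combination of the numbers $n^{-2\Re c_1}$. It gets $|\lambda|^2\le r$ by citing Gau--Wu, which is exactly your Cauchy--Schwarz step.

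Your caveat about $c_0=0$ is justified and worth keeping. For a constant symbol, the vectors $C_\phi e_n=n^{-c_1}e_1$ are not orthogonal, so the paper's formula for the second coordinate breaks down. In fact the paper's own proposition on constant symbols shows that $\langle C_\phi x,C_\phi x\rangle$ attains $\zeta(2\Re c_1)>1$, and also attains $0$. So (ii) genuinely requires $c_0\ge 1$.

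\textbf{Part (i).} Here your route differs. The paper quotes Bayart's theorem that $C_\phi$ is an isometry, and Finet--Queff\'elec's Proposition 7 that $W(C_\phi)=\mathbb{D}\cup\{1\}$. You derive the numerical range directly, through an explicit Wold-type decomposition $C_\phi\cong 1\oplus\bigoplus U$ along the chains $m,m^{c_0},m^{c_0^2},\dots$.

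The citation is shorter. Your argument is self-contained and shows exactly where $c_0\ge 2$ enters: for $c_0=1$ there are no such chains, and $C_\phi$ is a diagonal unitary with a different numerical range. It also explains why $1$ is the only attained point outside the open disk, namely that the unitary part of the isometry is one-dimensional.
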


\begin{proof}
    \begin{enumerate}[label=(\roman*)]
        \item From \cite[Theorem 17]{bayart2002hardy}, it follows that $C_\phi$ is an isometry and from \cite[Proposition 7]{finet2004numerical}, we have $W(C_\phi)=\mathbb{D}\cup\{1\}$. Hence the result follows.
        \item For $e_n(s)=n^{-s}$, we have $C_{\phi}e_n(s)=n^{-c_1}e_{n^{c_0}}(s)$. Therefore, for $\displaystyle f(s)=\sum_{n=1}^\infty a_n e_n(s)\in\mathcal{H}^2$ with $\displaystyle \|f\|=\sum_{n=1}^\infty|a_n|^2=1$, we obtain
        $\displaystyle \langle C_\phi f,f\rangle=\sum_{n=1}^\infty a_n \overline{a_{n^{c_0}}}n^{-c_1}$ and $\displaystyle \langle C_\phi f, C_\phi f\rangle=\sum_{n=1}^\infty |a_n|^2n^{-2\Re c_1}$. Thus,
        $$DW(C_\phi)=\left\{\left(\sum_{n=1}^\infty a_n \overline{a_{n^{c_0}}}n^{-c_1},\sum_{n=1}^\infty |a_n|^2n^{-2\Re c_1}\right):a_n\in\mathbb{C} \ \text{and}\ \sum_{n=1}^\infty |a_n|^2=1\right\}.$$
        The second co-ordinate is the convex combination of $\{n^{-2\Re c_1}:n\in\mathbb{N}\}$, i.e. $(0,1]$. From \cite[pg 386]{gau2021numerical}, we get the result.
    \end{enumerate}
\end{proof}

\begin{remark}\label{final}
    For normal composition operator $C_\phi$ defined on $\mathcal{H}^2$, by \cite[Theorem 15]{bayart2002hardy}, we have $\phi(s)=s+c_1$, where $\Re c_1\geq0$ and hence from \cite[Theorem 2.12, Corollary 2.15]{yao2021complex}, the spectrum of $C_\phi$ is $\{\overline{n^{-{c_1}}:n\in\mathbb{N}}\}$. Thus, \cite[Theorem 2.1]{li2009davis} implies
    $$DW(C_\phi)\subseteq\textbf{conv}\left\{(\lambda,|\lambda|^2):\lambda\in \{\overline{n^{-{c_1}}:n\in\mathbb{N}}\}\right\}=\overline{DW(C_\phi)}$$
    where $\textbf{conv}$ denotes the convex hull.
\end{remark}

We conclude the section with the following questions.

\begin{question}
    Let $\phi$ be of the form $\phi(s)=c_0s+c_1$ where $c_0\in \mathbb{N}\cup\{0\}$ and $\Re c_1\geq0$. Can the shape of $DW(C_\phi)$ be completely determined?
\end{question}

\begin{question}
    Suppose $\phi(s)$ is not of the form $\phi(s)=c_0s+c_1$ with $c_0\in \mathbb{N}\cup\{0\}$. What can be said about $DW(C_\phi)$?
\end{question}

\section*{Declarations}

\noindent \textit{Acknowledgements:} Mr. Subhadip Halder would like to thank the UGC, Govt. of India for the financial support (NTA Ref. No. 211610189204) in the form of fellowship.\\
\textit{Author Contributions:} All authors contributed equally to this work. \\
 \textit{Data Availability :} Not applicable.\\
\textit{Conflict of interest:} The authors declare that they have no competing interests.

\end{document}